\title{Filtered restriction of geometric $\D$-modules}
\author{R\'emi Arcadias\\
Depto.\ de \'Algebra, Fac.\ Matem\'aticas, Campus Reina Mercedes\\
41012 Sevilla (Spain)\\
E-mail: rarcadias@us.es}
\newtheorem{proposition}{Proposition}[section]
\newtheorem{lemma}{Lemma}[section]
\newtheorem{corollary}{Corollary}[section]
\theoremstyle{definition}
\theoremstyle{definition}\newtheorem{definition}{Definition}[section]
\theoremstyle{definition}
\theoremstyle{definition}\newtheorem{remark}{Remark}[section]
\newcommand{\D}{\mathcal{D}}
\newcommand{\C}{\mathbb{C}}
\begin{document}

\maketitle



\section*{Introduction}

Let $\D$ (resp.\ $\D'$) be the ring of germs at the origin of linear partial differential operators with analytic coefficients on $\C^n$ (resp.\ $\C^n\times \C^p$). We take a system of coordinates $(x_1,\dots,x_n)$ on $\C^n$ and  $(x_1,\dots,x_n,t_1,\dots,t_p)$  on $\C^n\times\C^p$ and we denote by $i:\C^n\to\C^n\times\C^p$ the inclusion $x\to (x,0)$. 
Let $M$ be a finitely generated $\D'$-module specializable along 
$Y=\{t_1=\cdots=t_p=0\}$. The $\D$-module theoretic restriction $i^*M$ of $M$ along $Y$ is defined as the complex $(\D'/\sum t_i\D')\otimes^L M$, realized as a Koszul complex $(M\otimes \Lambda^i \C^p)_i$, whose cohomology groups are finitely generated over $\D$.
T.\ Oaku and N.\ Takayama show in \cite{OT01b} how to realize this complex by a complex composed of finitely generated $\D$-modules. Let $V(\D')$ denote the $V$-filtration of Malgrange-Kashiwara along $Y$, and let $k_1$ be the maximal integral root of the $b$-function of $M$ along $Y$. Then $(M\otimes \Lambda^i \C^p)_i$ is quasi-isomorphic to the sub-complex  $(V_{k_1+i}(M)\otimes \Lambda^i \C^p)_i$.
Then a free resolution of $i^*M$ can be computed as follows:
let
\begin{equation}\label{eq*}
\cdots\to\mathcal{L}_1\to\mathcal{L}_0\to M\to 0
\end{equation}
be a free resolution of $M$ adapted to the $V$-filtration. Then $(V_{k_1+i}(M)\otimes \Lambda^i \C^p)_i$ is isomorphic, in the derived category of $\D$-modules, to the complex
\[
\cdots\to\frac{V_{k_1}(\mathcal{L}_1)}{\sum t_i V_{k_1+1}(\mathcal{L}_1)}\to
\frac{V_{k_1}(\mathcal{L}_0)}{\sum t_iV_{k_1+1}(\mathcal{L}_0)}\to 0,
\]
denoted by $(V_{k_1}(\mathcal{L}_i)/(\sum_j t_jV_{k_1+1}(\mathcal{L}_{i+1})))_i$.

On the other hand, the most classical filtration in $\D$-module theory is that by the order, denoted by $(F_d(\D))$. One defines the notion of a minimal filtered free resolution of a $F$-filtered $\D$-module, thus one has the notion of Betti numbers of such a module. We remark that the notion of Betti numbers can be defined as well for complexes of $\D$-modules in the derived category of $F$-filtered $\D$-modules $DF(\D)$. Furthermore, T. Oaku and N. Takayama\cite{OT01} and M. Granger and T. Oaku\cite{GO04} define the notion of a minimal bifiltered free resolution of a $(F,V)$-bifiltered $\D'$-module. With the help of this theory, we establish 
 that the isomorphism 
 \[(V_{k_1+i}(M)\otimes \Lambda^i \C^p)_i\simeq
(V_{k_1}(\mathcal{L}_i)/(\sum_j t_jV_{k_1+1}(\mathcal{L}_{i+1})))_i
\]
can be seen in the category $DF(\D)$ (Proposition \ref{prop9}). 

A further property on complexes of $F$-filtered $\D$-modules is that of strictness, a property  any filtered free resolution of a $F$-filtered module satisfies. In the case where $p=1$ and $t:M\to M$ is injective, we give conditions such that the complex 
\[
(V_{k_1+i}(M)\otimes \Lambda^i \C^p)_i=(0\to V_{k_1+1}(M)\stackrel{t}{\to} V_{k_1}(M)\to 0)
\]
is strict (Proposition \ref{prop10}). In that case the complex 
$$
(V_{k_1}(\mathcal{L}_i)/(t V_{k_1+1}(\mathcal{L}_{i+1})))_i
$$ 
becomes a $F$-filtered free resolution of the module $M/tM$. This latter fact, suggested by Toshinori Oaku, was the original motivation of this paper.

 In the last section, we apply our results to the algebraic local cohomology module 
 $N=\mathcal{O}[1/f]/\mathcal{O}$, seen as a $\D$-module. Here $f$ is a quasi-homogeneous polynomial with an isolated singularity at the origin.
$N$ is endowed with the good $F$-filtration
\[
F_d(N)=\sum_{1\leq j\leq k}F_{d-j-1}(\D)[1/f^j],
\]
which takes into account the order of the pole $f$, where $-k$ is the least integral root of the Bernstein-Sato polynomial associated with $f$. By using the restriction, we give a minimal presentation associated with this data (Proposition \ref{prop4}). 

\section{Free resolutions of $\D$-modules}

Let $\D=\C\{x_1,\dots,x_n\}[\partial_1,\dots,\partial_n]$ denote the ring of germs at the origin of linear partial differential operators with analytic coefficients on $\C^n$. It is endowed with a filtration by the order, denoted by $(F_d(\D))_{d\in\mathbb{Z}}$. 

Let $M$ be a finitely generated left module over $\D$. An $F$-filtration $(F_d(M))_{d\in\mathbb{Z}}$ on $M$ is an exhaustive sequence of subspaces 
satisfying  $F_d(\D)F_{d'}(M)\subset F_{d+d'}(M)$ for any $d,d'$. It is called a good $F$-filtration if moreover there exist $f_1,\dots,f_r\in M$ and a vector shift $\mathbf{n}=(n_1,\dots,n_r)$ such that for any $d$,
\[
F_{d}(M)=\sum F_{d-n_i}(\D)f_i.
\]
For example, we denote by $\D^r[\mathbf{n}]$ the free module $\D^r$ with basis $e_1,\dots,e_r$ endowed with the filtration 
\[
F_{d}(\D^r[\mathbf{n}])=\sum F_{d-n_i}(\D)e_i.
\]
Thus a good $F$-filtration of $M$ is defined by a surjective map 
$\D^r[\mathbf{n}]\to M$. 

One then defines the notion of a $F$-filtered free resolution: it is an exact sequence
\begin{equation}\label{eq0}
\cdots\to\mathcal{L}_1\to\mathcal{L}_0\to M\to 0
\end{equation}
which induces for each $d$ an exact sequence of vector spaces
\[
\cdots\to F_d(\mathcal{L}_1)\to F_d(\mathcal{L}_0)\to F_d(M)\to 0.
\]
Following T. Oaku and N. Takayama\cite{OT01} and M. Granger and T. Oaku\cite{GO04}, we are able to define the notion of a \emph{minimal} 
$F$-filtered free resolution. To that end we introduce, following F.J.\ Castro-Jim\'enez and L.\ Narv\'aez-Macarro\cite{narvaez97}, the homogenization ring $\mathbf{R}\D=\bigoplus F_d(\D)T^d$. It is isomorphic to the ring 
\[
\D^{(h)}=\C\{\mathbf{x}\}[\partial_1,\dots,\partial_n,h]
\]
satisfying for any $i$, $\partial_ix_i-x_i\partial_i=h$. The ring $\D^{(h)}$ is graded by the order in $\partial,h$. The homogenization of a $F$-filtered module $M$ is defined by 
\[
\mathbf{R}M=\bigoplus_d F_d(M)T^d. 
\]
In particular we have an isomorphism 
\[
\mathbf{R}(\D^r[\mathbf{n}])\simeq(\D^{(h)})^r[\mathbf{n}]
\]
where at the right hand side the vector shift $\mathbf{n}$ refers to the grading of the module. The $F$-filtered free resolution (\ref{eq0}) induces a graded free resolution 
\begin{equation}\label{eq0b}
\cdots\to\mathbf{R}\mathcal{L}_1\to\mathbf{R}\mathcal{L}_0\to \mathbf{R}M\to 0.
\end{equation}
This resolution is called \emph{minimal} if all the entries of the matrices representing the maps $\mathbf{R}\mathcal{L}_i\to\mathbf{R}\mathcal{L}_{i-1}$ belong
to the maximal two-sided ideal of $\D^{(h)}$ generated by $(x_i)_i,(\partial_i)_i,h$.
The resolution (\ref{eq0}) is called minimal if it induces a minimal graded free resolution (\ref{eq0b}). There exists a minimal $F$-filtered free resolution of $M$, unique up to $F$-filtered isomorphism (see \cite{GO04}).
 
Let us denote now 
$\D_{x,t}=\C\{x_1,\dots,x_n,t_1,\dots,t_p\}
[\partial_1,\dots,\partial_n,\partial_{t_1},\dots,\partial_{t_p}]$ the ring of germs of linear partial differential operators with analytic coefficients on $\C^n\times \C^p$. It is endowed with the $F$-filtration by the order $(F_d(\D_{x,t}))$. Another filtration is the so-called $V$-filtration of B.\ Malgrange and M.\ Kashiwara, denoted by $(V_k(\D_{x,t}))_{k\in\mathbb{Z}}$, defined as follows: 
let us define the $V$-order of a monomial 
\[
\mathrm{ord}^V(\lambda\mathbf{x}^{\mathbf{\alpha}}t^{\mu} 
\partial^{\mathbf{\beta}}\partial_t^{\nu})=\sum \nu_i-\sum \mu_i,
\]
with $\lambda\in\C\setminus 0$,
then $V_k(\D_{x,t})$ denote the set of operators whose development only contains monomials $\mathbf{x}^{\mathbf{\alpha}}t^{\nu} 
\partial^{\mathbf{\beta}}\partial_t^{\mu}$ having order at most $k$. 

We have also the notion of a 
$V$-filtration of a $\D_{x,t}$-module: that is an exhaustive sequence of subspaces $(V_k(M))_{k\in\mathbb{Z}}$ such that for any $k,k'$, we have 
$V_k(\D_{x,t})V_{k'}(M)\subset V_{k+k'}(M)$. As above let
 $\D_{x,t}^r[\mathbf{m}]$ denote the free module $\D_{x,t}^r$ with basis $e_1,\dots,e_r$ endowed with the $V$-filtration 
\[
V_{k}(\D_{x,t}^r[\mathbf{m}])=\sum V_{k-m_i}(\D_{x,t})e_i.
\]
If $M$ is endowed with a $V$-filtration defined from a surjective map $\D_{x,t}^r[\mathbf{m}]\to M$, then $M$ admits a $V$-filtered free resolution: that is an exact sequence
\begin{equation*}
\cdots\to\D_{x,t}^{r_1}[\mathbf{m}^{(1)}]\to
\D_{x,t}^{r_0}[\mathbf{m}^{(0)}]\to M\to 0
\end{equation*}
which induces for each $k$ an exact sequence of vector spaces 
\begin{equation*}
\cdots\to V_k(\D_{x,t}^{r_1}[\mathbf{m}^{(1)}])\to
V_k(\D_{x,t}^{r_0}[\mathbf{m}^{(0)}])\to V_k(M)\to 0.
\end{equation*}
A $V$-filtered free resolution is the object needed in the computation of the restriction, as we shall see in the next section. 

Let us describe now how to define the notion of a minimal $V$-filtered free resolution of $M$. To that end we introduce the bifiltration
\[
F_{d,k}(\D_{x,t})=F_d(\D_{x,t})\cap V_k(\D_{x,t})
\]
for each $d,k\in\mathbb{Z}$. A bifiltration of a $\D_{x,t}$-module $M$ 
is an exhaustive sequence of subspaces $(F_{d,k}(M))$
satisfying  $F_{d,k}(M)\subset F_{d+1,k}(M)\cap F_{d,k+1}(M)$ and $F_{d,k}(\D_{x,t})F_{d',k'}(M)\subset F_{d+d',k+k'}(M)$ for any $d,d',k,k'$. It is called a good bifiltration if moreover there exist $f_1,\dots,f_r\in M$ and two vector shifts $\mathbf{n}=(n_1,\dots,n_r)$ and $\mathbf{m}=(m_1,\dots,m_r)$ such that for any $d,k$,
\[
F_{d,k}(M)=\sum F_{d-n_i,k-m_i}(\D_{x,t})f_i.
\]
For example, we denote by $\D^r[\mathbf{n}][\mathbf{m}]$ the free module $\D_{x,t}^r$ with basis $e_1,\dots,e_r$ endowed with the good bifiltration 
\[
F_{d,k}(\D_{x,t}^r[\mathbf{n}][\mathbf{m}])=\sum F_{d-n_i,k-m_i}(\D_{x,t})e_i.
\]
Let us point out that if $M$ is endowed with a good bifiltration $(F_{d,k}(M))$, then it is endowed with a good $F$-filtration $F_d(M)=\cup_k F_{d,k}(M)$ and a $V$-filtration $V_k(M)=\cup_d F_{d,k}(M)$.
A bifiltered free resolution of $M$ is an exact sequence
\begin{equation*}
\cdots\to\D_{x,t}^{r_1}[\mathbf{n}^{(1)}][\mathbf{m}^{(1)}]\to
\D_{x,t}^{r_0}[\mathbf{n}^{(0)}][\mathbf{m}^{(0)}]\to M\to 0
\end{equation*}
which induces for each $d,k$ an exact sequence of vector spaces 
\begin{equation*}
\cdots\to F_{d,k}(\D_{x,t}^{r_1}[\mathbf{n}^{(1)}][\mathbf{m}^{(1)}])\to
F_{d,k}(\D_{x,t}^{r_0}[\mathbf{n}^{(0)}][\mathbf{m}^{(0)}])\to F_{d,k}(M)\to 0.
\end{equation*}
The ring $\D_{x,t}^{(h)}=\mathbf{R}\D_{x,t}$ is endowed by a $V$-filtration $V_k(\D_{x,t}^{(h)})$ similarly as above, by giving the weight $-1$ to each $t_i$ and the weight $1$ to each $\partial_{t_i}$. Equivalently, we have $V_k(\D_{x,t}^{(h)})=\bigoplus_d V_k(\D_{x,t})T^d$. In the same way we endow $\mathbf{R}M$ with the $V$-filtration $V_k(\mathbf{R}M)=\bigoplus_d F_{d,k}(M)T^d$. For example, we have an isomorphism  of $V$-filtered graded $\D_{x,t}^{(h)}$-modules 
\[
\mathbf{R}(\D_{x,t}^{(r)}[\mathbf{n}][\mathbf{m}])\simeq (\D_{x,t}^{(h)})^r[\mathbf{n}][\mathbf{m}]
\] 
where the vector shift $[\mathbf{n}]$ (resp.\ $[\mathbf{m}]$) refers to the grading  (resp.\ $V$-filtration).

Let us take a bifiltered free resolution 
\begin{equation}\label{eq0c}
\cdots\to\mathcal{L}_1\to\mathcal{L}_0\to M\to 0.
\end{equation}
It induces a $V$-filtered graded free resolution
\[
\cdots\to\mathbf{R}\mathcal{L}_1\to\mathbf{R}\mathcal{L}_0\to
\mathbf{R}M\to 0,
\]
thus a bigraded free resolution 
\begin{equation}\label{eq0d}
\cdots\to\mathrm{gr}^V(\mathbf{R}\mathcal{L}_1)\to
\mathrm{gr}^V(\mathbf{R}\mathcal{L}_0)\to
\mathrm{gr}^V(\mathbf{R}M)\to 0.
\end{equation}
Since it makes sense to talk about a minimal bigraded free resolution as (\ref{eq0c}), the authors of \cite{GO04} adopt the following definition: a bifiltered free resolution (\ref{eq0c}) is said to be minimal if so is the bigraded free resolution (\ref{eq0d}). They prove the existence and uniqueness (up to bifiltered isomorphism) of such a resolution.

\section{Free resolutions of complexes of $\D$-modules}

In that section we intend to define the notion of a minimal filtered free resolution of a complex of $\D$-modules in the corresponding derived category. We will review some facts,  perhaps well-known to specialists, for the clarity of our text. 

Let us denote by $CF(\D)$ the category of bounded complexes $\cdots\to M_i\to M_{i-1}\to\cdots$ where for each $i$, $M_i$ is a $F$-filtered $\D$-module and the differentials are $F$-adapted. 

A map $\alpha:M_{\bullet}\to N_{\bullet}$ is said to be a filtered quasi-isomorphism if the induced map $\mathrm{gr}({\alpha}):\mathrm{gr}^FM_{\bullet}\to \mathrm{gr}^FN_{\bullet}$ is a quasi-isomorphism. 

If $\alpha:M_{\bullet}\to N_{\bullet}$ is a morphism in $CF(\D)$, we denote by $C(\alpha)$ the mapping cone of $\alpha$, defined by $C(\alpha)_n=M_{i-1}\oplus N_i$ with differential $(x,y)\mapsto (-\phi(x),\psi(y)+\alpha(x))$. It is endowed with the filtration $F_d(C(\alpha)_i)=F_d(M_{i-1})\oplus F_d(N_i)$. In that way we have natural isomorphisms 
$F_d(C(\alpha))\simeq C(F_d(\alpha))$ and 
$\mathrm{gr}^F(C(\alpha))\simeq C(\mathrm{gr}^F(\alpha))$.

\begin{lemma}
Let $\alpha:M_{\bullet}\to N_{\bullet}$ as above. If for any $d$, $F_d(\alpha)$ is a quasi-isomorphism, then $\mathrm{gr}^F(\alpha)$ is a quasi-isomorphism.
\end{lemma}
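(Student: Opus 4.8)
The plan is to exploit the exact sequence relating a filtered complex, its filtered pieces, and the associated graded object, and to pass to a colimit. First I would recall that for a single $F$-filtered $\D$-module $P$ one has $F_d(P)/F_{d-1}(P) = \mathrm{gr}^F_d(P)$ by definition, and that this passes to complexes: for a complex $P_\bullet$ in $CF(\D)$ there is, for each $d$, a short exact sequence of complexes of $\C$-vector spaces
\[
0 \to F_{d-1}(P_\bullet) \to F_d(P_\bullet) \to \mathrm{gr}^F_d(P_\bullet) \to 0,
\]
since the differentials are $F$-adapted so they restrict to each $F_d$ and induce maps on the quotients. Applying this to $P_\bullet = M_\bullet$ and $P_\bullet = N_\bullet$ and using that $\alpha$ is a morphism in $CF(\D)$, I get a morphism of short exact sequences of complexes, hence by the long exact sequence in cohomology and the five lemma: if $F_{d-1}(\alpha)$ and $F_d(\alpha)$ are both quasi-isomorphisms, then $\mathrm{gr}^F_d(\alpha)$ is a quasi-isomorphism. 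That already proves the statement for each graded piece $\mathrm{gr}^F_d$, hence for $\mathrm{gr}^F(\alpha) = \bigoplus_d \mathrm{gr}^F_d(\alpha)$, since cohomology commutes with direct sums.

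So the argument reduces to the three-term exact-sequence plus five-lemma step just described; there is essentially no remaining obstacle, only a couple of routine verifications I would spell out. One is that the natural maps $F_d(C(\alpha)) \simeq C(F_d(\alpha))$ and $\mathrm{gr}^F(C(\alpha)) \simeq C(\mathrm{gr}^F(\alpha))$ noted in the excerpt let me phrase the whole thing in terms of the mapping cone: $F_d(\alpha)$ is a quasi-isomorphism for all $d$ iff $F_d(C(\alpha))$ is acyclic for all $d$, and I want to conclude $\mathrm{gr}^F(C(\alpha)) \simeq C(\mathrm{gr}^F(\alpha))$ is acyclic. With the cone reformulation the claim becomes: a complex $K_\bullet \in CF(\D)$ with $F_d(K_\bullet)$ acyclic for every $d$ has $\mathrm{gr}^F(K_\bullet)$ acyclic. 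This follows from the short exact sequence $0 \to F_{d-1}(K_\bullet) \to F_d(K_\bullet) \to \mathrm{gr}^F_d(K_\bullet) \to 0$ and the long exact cohomology sequence, since $H^*(F_{d-1}(K_\bullet)) = H^*(F_d(K_\bullet)) = 0$ forces $H^*(\mathrm{gr}^F_d(K_\bullet)) = 0$.

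The only point deserving care is the exhaustiveness hypothesis on $F$-filtrations and the boundedness of the complexes: I should note that $F_d$ of a complex is taken termwise and that, because the filtrations are exhaustive and the complexes bounded, no issue arises in identifying $\mathrm{gr}^F_d$ with the cokernel of $F_{d-1}\hookrightarrow F_d$ at the level of complexes. I expect this to be the gentlest lemma in the section; its role is to bridge the two notions ``$F_d(\alpha)$ quasi-isomorphism for all $d$'' and ``filtered quasi-isomorphism'' so that later results about strictness and minimal filtered resolutions can be stated in terms of the more checkable condition on each $F_d$. I would close by remarking that the converse (a filtered quasi-isomorphism need not satisfy $F_d(\alpha)$ quasi-iso for each $d$ without a strictness hypothesis) is exactly what motivates introducing strictness afterwards, but that is outside the scope of this lemma.
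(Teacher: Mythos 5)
Your proof is correct and takes essentially the same route as the paper: the paper also passes to the mapping cone via the identifications $F_d(C(\alpha))\simeq C(F_d(\alpha))$ and $\mathrm{gr}^F(C(\alpha))\simeq C(\mathrm{gr}^F(\alpha))$, and the short-exact-sequence/long-exact-sequence argument you spell out for $0\to F_{d-1}\to F_d\to\mathrm{gr}^F_d\to 0$ is precisely the step the paper compresses into ``it easily implies.'' Only your closing aside is slightly off target: the paper does prove a converse (Proposition \ref{prop7}), but under the hypothesis of good filtrations rather than strictness; this does not affect the validity of your proof of the lemma itself.
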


\begin{proof}
$F_d(\alpha)$ is a quasi-isomorphism is equivalent to saying that $C(F_d(\alpha))$ is exact. Thus $F_d(C(\alpha))$ is exact and it easily implies that $\mathrm{gr}^F(C(\alpha))\simeq C(\mathrm{gr}^F(\alpha))$ is exact. That gives the result.
\end{proof} 

We have a converse statement:

\begin{proposition}\label{prop7}
Let $\alpha:M_{\bullet}\to N_{\bullet}$ be a morphism in $CF(\D)$, such that for each $i$, $M_i$ and $N_i$ are endowed with good filtrations. If $\mathrm{gr}^F(\alpha)$ is a quasi-isomorphism, then $\alpha$ is a quasi-isomorphism and induces for each $d$ a quasi-isomorphism $F_d(\alpha)$.
\end{proposition}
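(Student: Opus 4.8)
The plan is to translate the assertion into a statement about the mapping cone $C(\alpha)$, lift it to the homogenization ring $\D^{(h)}=\mathbf{R}\D$, and there settle it by a graded Nakayama argument in the Rees variable $T$.

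First I would record the reductions. Writing $K_{\bullet}=C(\alpha)$, the natural isomorphisms $F_d(C(\alpha))\simeq C(F_d(\alpha))$ and $\mathrm{gr}^F(C(\alpha))\simeq C(\mathrm{gr}^F(\alpha))$ noted above turn the hypothesis into the statement that $\mathrm{gr}^F(K_{\bullet})$ is exact, and the two conclusions into the statements that $F_d(K_{\bullet})$ is exact for every $d$ and that $K_{\bullet}$ is exact. Each term $K_i=M_{i-1}\oplus N_i$ carries the good filtration $F_d(K_i)=F_d(M_{i-1})\oplus F_d(N_i)$ and the differentials of $K_{\bullet}$ are $F$-adapted, so applying $\mathbf{R}$ termwise yields a bounded complex $\mathbf{R}K_{\bullet}$ of finitely generated graded $\D^{(h)}$-modules with degree-$0$ differentials, the finite generation being equivalent to goodness of each $F_{\bullet}(K_i)$. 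From the formula $\mathbf{R}K_i=\bigoplus_d F_d(K_i)T^d$ I would extract three facts used below: $\mathbf{R}K_i$ is $T$-torsion free; the degree-$d$ homogeneous component of the complex $\mathbf{R}K_{\bullet}$ is exactly $F_d(K_{\bullet})$ with its induced differential; and $\mathbf{R}K_{\bullet}/T\,\mathbf{R}K_{\bullet}\simeq\mathrm{gr}^F(K_{\bullet})$.

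Next, using $T$-torsion freeness, the sequence $0\to\mathbf{R}K_{\bullet}\stackrel{T}{\to}\mathbf{R}K_{\bullet}\to\mathrm{gr}^F(K_{\bullet})\to 0$ is a short exact sequence of complexes, and its long exact homology sequence together with the exactness of $\mathrm{gr}^F(K_{\bullet})$ shows that multiplication by $T$ is bijective on every $H_j(\mathbf{R}K_{\bullet})$. Since $\D^{(h)}$ is Noetherian, $H_j(\mathbf{R}K_{\bullet})$ is a finitely generated graded $\D^{(h)}$-module, hence bounded below in its grading because $\D^{(h)}=\bigoplus_{d\geq 0}F_d(\D)T^d$ is non-negatively graded. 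As $T$ has degree $1$, surjectivity of multiplication by $T$ forces the lowest nonzero graded piece of $H_j(\mathbf{R}K_{\bullet})$ to equal $T$ times the next lower (hence zero) piece, so $H_j(\mathbf{R}K_{\bullet})=0$. Thus $\mathbf{R}K_{\bullet}$ is exact as a complex of graded modules, each of its homogeneous components is an exact complex, and so $F_d(C(\alpha))$ is exact for all $d$, i.e.\ each $F_d(\alpha)$ is a quasi-isomorphism. Finally, the filtrations being exhaustive, $C(\alpha)=\bigcup_d F_d(C(\alpha))$ is a directed union of subcomplexes; homology commutes with such unions, so $C(\alpha)$ is exact and $\alpha$ is a quasi-isomorphism.

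I expect the only substantial ingredient to be the dictionary between good filtrations and finite generation of $\mathbf{R}(-)$ over $\D^{(h)}$, combined with Noetherianity of $\D^{(h)}$: that is exactly what makes $H_j(\mathbf{R}K_{\bullet})$ finitely generated, hence bounded below, which is the hypothesis the graded Nakayama step needs. Everything else is routine bookkeeping from the definitions: the identity $\mathbf{R}C(\alpha)=C(\mathbf{R}\alpha)$, the fact that an $F$-adapted map induces a degree-$0$ morphism of Rees modules, the $T$-torsion freeness of $\mathbf{R}K_i$, and the exactness of directed colimits of complexes.
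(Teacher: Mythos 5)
Your argument is correct, and its opening move --- converting both hypothesis and conclusions into statements about the mapping cone via $F_d(C(\alpha))\simeq C(F_d(\alpha))$ and $\mathrm{gr}^F(C(\alpha))\simeq C(\mathrm{gr}^F(\alpha))$ --- is exactly the reduction the paper makes. The difference lies in how the key implication (a complex of modules with good filtrations whose $\mathrm{gr}^F$ is exact is itself exact at every filtration level) is handled: the paper disposes of it in one line by invoking a statement analogous to Proposition 2.5 of \cite{GO04}, whereas you prove it from scratch by passing to the Rees complex $\mathbf{R}C(\alpha)$ over $\D^{(h)}$, using $T$-torsion freeness to obtain the short exact sequence of complexes $0\to\mathbf{R}K_{\bullet}\stackrel{T}{\to}\mathbf{R}K_{\bullet}\to\mathrm{gr}^F K_{\bullet}\to 0$, and then killing each $H_j(\mathbf{R}K_{\bullet})$ by graded Nakayama (finite generation over the Noetherian, non-negatively graded $\D^{(h)}$ gives boundedness below in the grading, and surjectivity of the degree-one element $T$ then forces vanishing), before recovering exactness of $C(\alpha)$ itself from exhaustiveness of the filtrations. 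All the steps check out --- the identification of the degree-$d$ component of $\mathbf{R}K_{\bullet}$ with $F_d(K_{\bullet})$, the equivalence of goodness with finite generation of the Rees module, and the final colimit argument are all legitimate in this setting --- so what you have written is a self-contained expansion of the lemma the paper cites rather than a genuinely different route; it buys independence from \cite{GO04} at the cost of reproving a standard fact.
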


\begin{proof}
We have that $\mathrm{gr}^F(C(\alpha))\simeq C(\mathrm{gr}^F(\alpha))$ is exact. 
Since each $C(\alpha)_i$ is endowed with a good filtration, then by a statement analogous to \cite{GO04}, Proposition 2.5, it follows that $C(\alpha)$ is exact, moreover for each $d$, $F_d(C(\alpha))$ is exact, which establishes the statement.
\end{proof}

\begin{corollary}
Let $\alpha:M_{\bullet}\to N_{\bullet}$ be a morphism in $CF(\D)$, such that for each $i$, $M_i$ and $N_i$ are endowed with good filtrations. Then $\alpha$ is a filtered quasi-isomorphism if and only if for each $d$, $F_d(\alpha)$ is a quasi-isomorphism. Moreover, if $\alpha$ is a filtered quasi-isomorphism, then it is a quasi-isomorphism.
\end{corollary}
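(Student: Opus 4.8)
The plan is to derive the Corollary directly from Lemma 1.1 and Proposition \ref{prop7}, since together they already provide both implications once we unwind the definition of a filtered quasi-isomorphism. First I would recall that, by definition, $\alpha$ is a filtered quasi-isomorphism precisely when $\mathrm{gr}^F(\alpha):\mathrm{gr}^F M_\bullet\to\mathrm{gr}^F N_\bullet$ is a quasi-isomorphism. For the implication ``$\alpha$ filtered quasi-isomorphism $\Rightarrow$ $F_d(\alpha)$ a quasi-isomorphism for all $d$'', I would simply invoke Proposition \ref{prop7}: the hypothesis that each $M_i$ and $N_i$ carries a good filtration is exactly what that proposition requires, and its conclusion includes that $F_d(\alpha)$ is a quasi-isomorphism for every $d$. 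Conversely, for ``$F_d(\alpha)$ a quasi-isomorphism for all $d$ $\Rightarrow$ $\alpha$ filtered quasi-isomorphism'', I would apply Lemma 1.1, which gives exactly that $\mathrm{gr}^F(\alpha)$ is a quasi-isomorphism; note that this direction does not even need the good-filtration hypothesis.

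For the last sentence of the Corollary, that a filtered quasi-isomorphism is a quasi-isomorphism, I would again quote Proposition \ref{prop7}, whose statement explicitly asserts that $\alpha$ itself is a quasi-isomorphism under the stated hypotheses. Alternatively, one can see it through the mapping cone: $F_d(\alpha)$ being a quasi-isomorphism for all $d$ means $F_d(C(\alpha))$ is exact for all $d$, and since the filtration is exhaustive, $C(\alpha)=\bigcup_d F_d(C(\alpha))$ is exact, i.e.\ $\alpha$ is a quasi-isomorphism.

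Since every step is a direct citation of the two immediately preceding results, there is essentially no obstacle here; the only point requiring a sentence of care is making sure the good-filtration hypothesis on the $M_i$ and $N_i$ transfers to the terms $C(\alpha)_i = M_{i-1}\oplus N_i$ of the mapping cone, which it does because a finite direct sum of good filtrations is a good filtration, so that Proposition \ref{prop7} applies to $\alpha$ as required. I would therefore keep the proof to two or three lines, stating the forward implication by Proposition \ref{prop7}, the reverse by Lemma 1.1, and the final clause again by Proposition \ref{prop7}.
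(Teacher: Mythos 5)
Your proposal is correct and matches the paper's (implicit) argument exactly: the corollary is stated without proof precisely because the forward implication and the final clause are the content of Proposition \ref{prop7}, while the reverse implication is Lemma 1.1. The extra remarks about the mapping cone and direct sums of good filtrations are harmless but unnecessary, since Proposition \ref{prop7} is applied directly to $\alpha$ rather than reproved.
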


A complex 
\[
\cdots\to M_i\stackrel{\phi_i}{\to} M_{i-1}\to\cdots
\]
in $CF(\D)$ 
is said to be \emph{strict} if for each $i$ and $d$ we have 
$\mathrm{Im}\phi_i\cap F_d(M_{i-1})=\phi_i(F_d(M_{i}))$.  

\begin{lemma}
Let 
\[
\xymatrix{
M_2\ar[r]^{\phi_2}\ar[d] & M_1\ar[r]^{\phi_1}\ar[d]^{\alpha_1} & M_0\ar[d]\\
N_2\ar[r]^{\psi_2} & N_1\ar[r]^{\psi_1} & N_0
}
\]
be a commutative diagram of filtered $\D$-modules, such that the rows are complexes and 
$\alpha_1$ induces isomorphisms $H_1(M_{\bullet}))\simeq H_1(N_{\bullet}))$ and 
$H_1(F_d(M_{\bullet})))\simeq H_1(F_d(N_{\bullet})))$.
Then $\alpha_1$ induces an isomorphism
\[
\frac{F_d(\phi_{2}(M_{2}))}{\phi_{2}(F_d(M_{2}))}
\simeq
\frac{F_d(\psi_{2}(N_{2}))}{\psi_{2}(F_d(N_{2}))}.
\]
\end{lemma}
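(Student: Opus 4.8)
The plan is to construct the comparison map explicitly and prove bijectivity by a short element chase; I will also indicate how to package the same computation via the five lemma. First I would check that $\alpha_1$ really does induce a morphism between the two quotients. Writing $\alpha_2\colon M_2\to N_2$ for the left-hand vertical arrow, commutativity of the left square gives $\alpha_1\phi_2=\psi_2\alpha_2$, and since $\alpha_1,\alpha_2$ are filtered, $\alpha_1$ carries $\phi_2(M_2)\cap F_d(M_1)$ into $\psi_2(N_2)\cap F_d(N_1)$ and $\phi_2(F_d(M_2))$ into $\psi_2(F_d(N_2))$; hence it descends to a map
\[
\bar\alpha_1\colon\ \frac{F_d(\phi_{2}(M_{2}))}{\phi_{2}(F_d(M_{2}))}\ \longrightarrow\ \frac{F_d(\psi_{2}(N_{2}))}{\psi_{2}(F_d(N_{2}))}.
\]
To keep the bookkeeping light I would set $Z_1=\ker\phi_1$, $B_1=\phi_2(M_2)$, and likewise $\tilde Z_1=\ker\psi_1$, $\tilde B_1=\psi_2(N_2)$. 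Since the rows are complexes, $B_1\subseteq Z_1$, $H_1(M_\bullet)=Z_1/B_1$, $H_1(F_d(M_\bullet))=(Z_1\cap F_d(M_1))/\phi_2(F_d(M_2))$, the source of $\bar\alpha_1$ is the submodule $(B_1\cap F_d(M_1))/\phi_2(F_d(M_2))$ of $H_1(F_d(M_\bullet))$, and similarly over $N$.

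Injectivity is then immediate: if $b\in B_1\cap F_d(M_1)$ satisfies $\alpha_1(b)\in\psi_2(F_d(N_2))$, the class of $b$ in $H_1(F_d(M_\bullet))$ is killed by the isomorphism $H_1(F_d(M_\bullet))\simeq H_1(F_d(N_\bullet))$ induced by $\alpha_1$, so $b\in\phi_2(F_d(M_2))$. For surjectivity, given $\tilde b\in\tilde B_1\cap F_d(N_1)$ I would use surjectivity of that same isomorphism to pick $z\in Z_1\cap F_d(M_1)$ with $\alpha_1(z)-\tilde b\in\psi_2(F_d(N_2))$; then $\alpha_1(z)\in\tilde B_1$, so the class of $z$ in $H_1(M_\bullet)$ maps to $0$ in $H_1(N_\bullet)$, and since $\alpha_1$ induces an isomorphism $H_1(M_\bullet)\simeq H_1(N_\bullet)$ we get $z\in B_1$; thus $z\in B_1\cap F_d(M_1)$ and $\bar\alpha_1$ sends its class to that of $\tilde b$.

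Alternatively I could invoke the five lemma for the commutative ladder whose rows are the short exact sequences
\[
0\to\frac{B_1\cap F_d(M_1)}{\phi_2(F_d(M_2))}\to H_1(F_d(M_\bullet))\to F_d H_1(M_\bullet)\to 0
\]
— with $F_d H_1(M_\bullet)$ the image of $Z_1\cap F_d(M_1)$ in $H_1(M_\bullet)$ — together with their $N$-counterparts, the vertical maps being those induced by $\alpha_1$: the middle one is an isomorphism by hypothesis and the right-hand one is the restriction to a submodule of the isomorphism $H_1(M_\bullet)\simeq H_1(N_\bullet)$ (well defined because $\alpha_1$ is filtered and maps $\ker\phi_1$ into $\ker\psi_1$), hence injective, so the five lemma forces $\bar\alpha_1$ to be an isomorphism. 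I do not expect a genuine obstacle here; the only points needing care are that "$F_d$ of a subobject" is always the induced filtration (so that $F_d(\phi_2(M_2))=\phi_2(M_2)\cap F_d(M_1)$, matching the definition of strictness above), and that both hypotheses are used — the filtered $H_1$-isomorphism alone yields injectivity of $\bar\alpha_1$, but surjectivity additionally requires (the injectivity half of) the unfiltered isomorphism on $H_1$.
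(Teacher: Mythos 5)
Your proof is correct, and both of your hypotheses are used exactly where they must be. The paper proves the same statement by three successive applications of the five lemma: it first compares $\mathrm{Ker}\,\phi_1/(F_d(\mathrm{Ker}\,\phi_1)+\mathrm{Im}\,\phi_2)$ with its $N$-counterpart, then deduces an isomorphism $F_d(\mathrm{Ker}\,\phi_1)/F_d(\mathrm{Im}\,\phi_2)\simeq F_d(\mathrm{Ker}\,\psi_1)/F_d(\mathrm{Im}\,\psi_2)$, and finally feeds that into the ladder
\[
0\to\frac{F_d(\phi_2(M_2))}{\phi_2(F_d(M_2))}\to\frac{F_d(\mathrm{Ker}\,\phi_1)}{\phi_2(F_d(M_2))}\to\frac{F_d(\mathrm{Ker}\,\phi_1)}{F_d(\phi_2(M_2))}\to 0,
\]
which is precisely the short exact sequence of your ``alternative'' packaging (your $F_dH_1(M_\bullet)$ is $F_d(\mathrm{Ker}\,\phi_1)/F_d(\mathrm{Im}\,\phi_2)$ under the second isomorphism theorem). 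Your route is shorter because you notice that in this last ladder one only needs the right-hand vertical map to be \emph{injective}, not an isomorphism, and injectivity is immediate: that term embeds into $H_1(M_\bullet)$, on which $\alpha_1$ is injective by hypothesis. This lets you skip the paper's first two diagrams entirely; your primary element chase is just the same argument unwound, with the filtered $H_1$-isomorphism giving injectivity of $\bar\alpha_1$ plus the lift in the surjectivity step, and the unfiltered one pulling that lift into $\mathrm{Im}\,\phi_2$. What the paper's longer version buys is the intermediate isomorphisms themselves (which are mildly stronger byproducts); what yours buys is economy and the explicit observation of which half of each hypothesis is actually needed.
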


\begin{proof}
We have a commutative diagram with exact rows
\[
\xymatrix{
\frac{F_d(\mathrm{Ker}\,\phi_1)}{\phi_2(F_d(M_2))}\ar[r]\ar[d]^{\mathrm{iso}} &
\frac{\mathrm{Ker}\,\phi_1}{\mathrm{Im}\,\phi_2}\ar[r]\ar[d]^{\mathrm{iso}} &
\frac{\mathrm{Ker}\,\phi_1}{F_d(\mathrm{Ker}\,\phi_1)+
\mathrm{Im}\,\phi_2}\ar[r]\ar[d] &
0\\
\frac{F_d(\mathrm{Ker}\,\psi_1)}{\psi_2(F_d(N_2))}\ar[r] &
\frac{\mathrm{Ker}\,\psi_1}{\mathrm{Im}\,\psi_2}\ar[r] &
\frac{\mathrm{Ker}\,\psi_1}{F_d(\mathrm{Ker}\,\psi_1)+
\mathrm{Im}\,\psi_2}\ar[r] &
0,
}
\]
then by the five lemma, $\alpha_1$ induces an isomorphism 
\[
\frac{\mathrm{Ker}\,\phi_1}{F_d(\mathrm{Ker}\,\phi_1)+\mathrm{Im}\,\phi_2}
\simeq
\frac{\mathrm{Ker}\,\psi_1}{F_d(\mathrm{Ker}\,\psi_1)+\mathrm{Im}\,\psi_2}.
\]
Similarly, from the diagram with exact rows 
\[
\xymatrix{
0 \ar[r] & \frac{F_d(\mathrm{Ker}\,\phi_1)}{F_d(\mathrm{Im}\,\phi_2)}
\ar[r]\ar[d] &
\frac{\mathrm{Ker}\,\phi_1}{\mathrm{Im}\,\phi_2}\ar[r]\ar[d]^{\mathrm{iso}}  & 
\frac{\mathrm{Ker}\,\phi_1}{F_d(\mathrm{Ker}\,\phi_1)+\mathrm{Im}\,\phi_2}\ar[r]
\ar[d]^{\mathrm{iso}}  &
0\\
0 \ar[r] & \frac{F_d(\mathrm{Ker}\,\psi_1)}{F_d(\mathrm{Im}\,\psi_2)}\ar[r] &
\frac{\mathrm{Ker}\,\psi_1}{\mathrm{Im}\,\psi_2}\ar[r]& 
\frac{\mathrm{Ker}\,\psi_1}{F_d(\mathrm{Ker}\,\psi_1)+
\mathrm{Im}\,\psi_2}\ar[r] &
0,
}
\]
we deduce that $\alpha_1$ induces an isomorphism 
\[
\frac{F_d(\mathrm{Ker}\,\phi_1)}{F_d(\mathrm{Im}\,\phi_2)}
\simeq 
\frac{F_d(\mathrm{Ker}\,\psi_1)}{F_d(\mathrm{Im}\,\psi_2)}.
\]
Similarly, from the diagram with exact rows 
\[
\xymatrix{
0 \ar[r] & \frac{F_d(\phi_{2}(M_{2}))}{\phi_{2}(F_d(M_{2}))}\ar[r] \ar[d] & 
\frac{F_d(\mathrm{Ker}\,\phi_1)}{\phi_2(F_d(M_2))}\ar[r]\ar[d]^{\mathrm{iso}} &
\frac{F_d(\mathrm{Ker}\,\phi_1)}{F_d(\phi_2(M_2))}\ar[r]\ar[d]^{\mathrm{iso}} & 0\\
0 \ar[r] & \frac{F_d(\psi_{2}(N_{2}))}{\psi_{2}(F_d(N_{2}))}\ar[r]& 
\frac{F_d(\mathrm{Ker}\,\psi_1)}{\psi_2(F_d(N_2))}\ar[r] &
\frac{F_d(\mathrm{Ker}\,\psi_1)}{F_d(\psi_2(N_2))}\ar[r] & 0,
}
\]
the result follows.
\end{proof}

\begin{corollary}\label{cor2}
Let $\alpha:M_{\bullet}\to N_{\bullet}$ be a morphism in $CF(\D)$ which is a quasi-isomorphism and which induces quasi-isomorphisms 
$F_d(M_{\bullet})\to F_d(N_{\bullet})$. Then $C_{\bullet}$ is strict if and only if $D_{\bullet}$ is strict.
\end{corollary}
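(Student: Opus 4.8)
The plan is to reduce the assertion to a termwise application of the preceding Lemma, after which essentially nothing remains. (We write $M_\bullet$ and $N_\bullet$ for the two complexes in the statement.) First I would rewrite the strictness condition in the form used by the Lemma. Since every differential $\phi_i\colon M_i\to M_{i-1}$ of $M_\bullet$ is $F$-adapted, one always has $\phi_i(F_d(M_i))\subseteq \mathrm{Im}\,\phi_i\cap F_d(M_{i-1})$, and by definition $\mathrm{Im}\,\phi_i\cap F_d(M_{i-1})$ is the induced filtration $F_d(\phi_i(M_i))$ on $\phi_i(M_i)$. Hence $M_\bullet$ is strict if and only if the quotient $F_d(\phi_i(M_i))/\phi_i(F_d(M_i))$ is zero for every $i$ and every $d$, and likewise for $N_\bullet$.

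Next I would unwind the hypotheses. Because $\alpha$ is $F$-adapted it restricts, for each $d$, to a morphism of complexes $F_d(\alpha)\colon F_d(M_\bullet)\to F_d(N_\bullet)$, so it makes sense to speak of the induced maps $H_i(F_d(\alpha))$. The assumption that $\alpha$ is a quasi-isomorphism says that each $H_i(\alpha)\colon H_i(M_\bullet)\to H_i(N_\bullet)$ is an isomorphism, and the assumption that each $F_d(\alpha)$ is a quasi-isomorphism says that each $H_i(F_d(\alpha))$ is an isomorphism. Consequently, for each fixed $i$, the commutative diagram with top row $M_{i+1}\to M_i\to M_{i-1}$ (differentials $\phi_{i+1},\phi_i$), bottom row $N_{i+1}\to N_i\to N_{i-1}$ (differentials $\psi_{i+1},\psi_i$), and vertical maps $\alpha_{i+1},\alpha_i,\alpha_{i-1}$ satisfies all the hypotheses of the Lemma, the role of its middle term $M_1$ being played by $M_i$ (the $H_1$ of that triple is exactly $H_i(M_\bullet)$, on which $\alpha_i$ and all $F_d(\alpha_i)$ are isomorphisms). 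Applying the Lemma then produces, for every $i$ and $d$, an isomorphism
\[
\frac{F_d(\phi_{i+1}(M_{i+1}))}{\phi_{i+1}(F_d(M_{i+1}))}\;\simeq\;\frac{F_d(\psi_{i+1}(N_{i+1}))}{\psi_{i+1}(F_d(N_{i+1}))}.
\]
At the ends of the bounded complexes one uses the conventions $M_j=N_j=0$ for indices $j$ outside the relevant range, so that the corresponding strictness conditions are vacuous and the Lemma still applies to the remaining triples.

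Finally I would conclude: by the first paragraph, $M_\bullet$ is strict exactly when the left-hand side above vanishes for all $i,d$, and $N_\bullet$ is strict exactly when the right-hand side vanishes for all $i,d$; since the two sides are isomorphic, $M_\bullet$ is strict if and only if $N_\bullet$ is strict.

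The only thing requiring care is organizational: verifying that the two quasi-isomorphism hypotheses genuinely supply the data the Lemma needs for every consecutive triple of the complexes, including the boundary indices, and keeping the index shift ($M_1\leftrightarrow M_i$, $\phi_2\leftrightarrow\phi_{i+1}$) straight. No new argument beyond the Lemma is needed.
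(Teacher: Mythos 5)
Your proof is correct and is exactly the argument the paper intends: the corollary is stated without proof precisely because it follows by applying the preceding Lemma to each consecutive triple $M_{i+1}\to M_i\to M_{i-1}$ (resp.\ $N_{i+1}\to N_i\to N_{i-1}$), whose middle homology is $H_i$ of the full complex, and reading strictness as the vanishing of $F_d(\phi_{i+1}(M_{i+1}))/\phi_{i+1}(F_d(M_{i+1}))$. Your handling of the index shift, the identification of the Lemma's hypotheses with the two quasi-isomorphism assumptions, and the boundary conventions for the bounded complex are all as expected (note that $C_{\bullet}$ and $D_{\bullet}$ in the statement are simply $M_{\bullet}$ and $N_{\bullet}$, as you assumed).
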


Let $KF(\D)$ denote the category whose objects are the objects of $CF(\D)$ and the maps are taken modulo ($F$-adapted) homotopies. Then $DF(\D)$ denotes the localisation of $KF(\D)$ with respect to filtered quasi-isomorphisms, as done in \cite{saito88}, 2.1.8 (see also \cite{laumon}). In other terms, $DF(\D)$ is the localization of $KF(\D)$ with respect to the null system composed of the complexes $M_{\bullet}$ such that $\mathrm{gr}(M_{\bullet})$ is acyclic (see in \cite{KS} an introduction to localization of categories). $D(\D)$ will denote the derived category of $\D$-modules.

Because of Corollary \ref{cor2}, the strictness of a complex $M_{\bullet}$, where each $M_i$ is endowed with a good filtration, makes sense in $DF(\D)$. 

As pointed out in \cite{walther00}, for a strict complex $M_{\bullet}$, for any $i,d$, $H_i(F_d(M_{\bullet}))$ is a subspace of $H_i(\mathcal{M}_{\bullet})$,   
more precisely we can define a good filtration on $H_i(\mathcal{M}_{\bullet})$ by $F_d(H_i(\mathcal{M}))=H_i(F_d(M_{\bullet}))$.

Let $\mathfrak{m}$ denotes the maximal graded ideal of $\mathrm{gr}^F(\D)$ generated by 
\[
x_1,\dots,x_n,\xi_1,\dots,\xi_n
\]
 and $\mathfrak{m}^{(h)}$ the maximal graded two-sided ideal of $\D^{(h)}$ generated by 
 \[
 x_1,\dots,x_n,\partial_1,\dots,\partial_n,h.
 \] 
We have $\mathrm{gr}^F(\D)/\mathfrak{m}\simeq\C$ and $\D^{(h)}/\mathfrak{m}^{(h)}\simeq\C$.

\begin{definition}
Let $\mathcal{L}_{\bullet}=(\cdots\to \mathcal{L}_i\stackrel{\psi_i}{\to}\mathcal{L}_{i-1}\to\cdots)$ a complex of $CF(\D)$ with for each $i$, $\mathcal{L}_i=\D^{r_i}[\mathbf{n}^{(i)}]$. The complex $\mathcal{L}_{\bullet}$ is said to be minimal if for any $i$, the matrix representing $\mathbf{R}\,\psi_i$ as its entries in $\mathfrak{m}^{(h)}$ (equivalently, for any $i$, the matrix representing $\mathrm{gr}\,\psi_i$ as its entries in $\mathfrak{m}$).    
\end{definition} 	

\begin{definition}
Let $M_{\bullet}$ be a complex of $DF(\D)$. A minimal resolution of $M_{\bullet}$ is a minimal complex $\mathcal{L}_{\bullet}$ isomorphic to $M_{\bullet}$ in $DF(\D)$.
\end{definition}

The numbers $r^{(i)}$ and the shifts $\mathbf{n}^{(i)}$ arising in a minimal resolution of $M_{\bullet}$ make sense. In fact, denoting
\begin{eqnarray*}
\beta_{i,j} & = & 
\mathrm{dim}_{\C} \mathrm{Tor}^{\mathrm{gr}^F(\D)}_i(\mathrm{gr}^F(M),\C)_j\\
& = & 
\mathrm{dim}_{\C} \mathrm{Tor}^{\D^{(h)}}_i(\mathcal{R}_F(M),\C)_j,
\end{eqnarray*}
we have $\beta_{i,j}=\mathrm{card}\{k, n^{(i)}_k=j\}$. In particular $r_i=\sum_j \beta_{i,j}$.

Let us see finally the link between resolutions of modules and resolutions of complexes.
The notion of a strict filtered free resolution of a complex generalizes the notion of a filtered free resolution of a module: regarding a filtered module $M$ as a complex in $DF(\D)$ concentrated in degree $0$, a strict filtered free resolution $\cdots\to\mathcal{L}_1\to\mathcal{L}_0\to 0$ of $M$ (thus $H_0(\mathcal{L}_{\bullet})\simeq M$) provides a filtered free resolution  $\cdots\to\mathcal{L}_1\to\mathcal{L}_0\to H_0(\mathcal{L}_{\bullet})\to 0$ in the sense of \cite{OT01}.

\section{Filtered restriction}

Let $M$ be a $\D_{x,t}$-module endowed with a good bibiltration $(F_{d,k}(M))$ such that $M=\D_{x,t}V_0(M)$. We assume that there exists a non-zero polynomial $b(s)\in\C[s]$ such that $b(t_1\partial_{t_1}+\cdots+t_p\partial_{t_p})\mathrm{gr}^V_0(M)=0$, which is the case if $M$ is holonomic. An algorithm to compute such a polynomial can be found in \cite{OT01b}. Let $k_1$ be an integer such that $b(k)\neq 0$ if $k>k_1$. 

Let $i:(\C^n,0)\to (\C^n\times\C^p,0)$ denote the embedding $x\to(x,0)=(x,t)$. The restriction $i^{*}M$ of $M$ along $Y=\{t_1=\cdots=t_p=0\}$ is by definition the complex 
$(\D_{x,t}/\sum_i T_i\D_{x,t})\otimes^L_{\D_{x,t}}M$ in $D(\D)$-modules. Let $\Lambda^i=\Lambda^i\C^p$. Then $i^*M$ is represented by the Koszul complex over the sequence $t_1,\cdots,t_p$:
\[
0\to M\otimes\Lambda^p\stackrel{\delta}{\to}\cdots\to 
M\otimes\Lambda^{1}\stackrel{\delta}{\to}
M\otimes\Lambda^0\to 0.
\]
This complex is made of non finitely generated $\D$-modules. By    
\cite{oaku97}, Proposition 5.2 and \cite{OT01b}, section 5, the truncation 
\[
0\to V_{k_1+p}(M)\otimes\Lambda^p\stackrel{\delta}{\to}\cdots\to 
V_{k_1+1}(M)\otimes\Lambda^{1}\stackrel{\delta}{\to}
V_{k_1}(M)\otimes\Lambda^0\to 0,
\]
denoted by $V_{k_1+\bullet}\otimes\Lambda^{\bullet}$,
is quasi-isomorphic to the above Koszul complex, thus still represents $i^*M$. 
The bifiltration allows us to endow $V_{k_1+\bullet}\otimes\Lambda^{\bullet}$ with a filtration $F_d(V_{k_1+\bullet}\otimes\Lambda^{\bullet})$:
\[
0\to F_{d,k_1+p}(M)\otimes\Lambda^p\stackrel{\delta}{\to}\cdots\to 
F_{d,k_1+1}(M)\otimes\Lambda^{1}\stackrel{\delta}{\to}
F_{d,k_1}(M)\otimes\Lambda^0\to 0.
\]
Let us remark that since the bifiltration of $M$ is good, then there exists $f_1,\dots,f_r$ such that $F_{d,k}(M)=\sum_i F_{d-n_i,k-m_i}(\D)f_i$. If $k_1$ is chosen so that for any $i$, $k_1\geq m_i$, then each $f_i$ belongs to $V_{k_1}(M)$ and for each $i=0,\dots,p$, $F_{d,k_1+i}(M)$ is a good filtration of $V_{k_1+i}(M)$.

Let 
\[
\cdots\to\mathcal{L}_1\stackrel{\psi_1}{\to}\mathcal{L}_0\stackrel{\psi_0}{\to} M\to 0
\]
be any bifiltered free resolution of $M$, with 
$\mathcal{L}_i=\D_{x,t}^{r^{(i)}}[\mathbf{n}^{(i)}][\mathbf{m}^{(i)}]$. In particular it is a $V$-filtered free resolution.
By \cite{OT01b}, Theorem 5.3, the complex
\begin{equation*}
\cdots\to\frac{V_{k_1}(\mathcal{L}_1)}{\sum_i t_iV_{k_1+1}(\mathcal{L}_1)}
\to
\frac{V_{k_1}(\mathcal{L}_0)}{\sum_i t_iV_{k_1+1}(\mathcal{L}_0)}
\to 0,
\end{equation*}
denoted by 
$V_{k_1}(\mathcal{L}_{\bullet})/\sum_i t_iV_{k_1+1}(\mathcal{L}_{\bullet})$ is a free complex isomorphic to $i^*M$ in $D(\D)$. We endow it with a $F$-filtration by setting
\begin{eqnarray*}
F_d\left(\frac{V_{k_1}(\mathcal{L}_i)}{tV_{k_1+1}(\mathcal{L}_i)}\right)
 & = &\frac{F_{d,k_1}(\mathcal{L}_i)}{F_{d,k_1}(\mathcal{L}_i)\cap tV_{k_1+1}(\mathcal{L}_i)}\\
&\simeq &\frac{F_{d,k_1}(\mathcal{L}_i)}{tF_{d,k_1+1}(\mathcal{L}_i)}.
\end{eqnarray*}
Note that the filtered $\D$-module 
$V_{k_1}(\mathcal{L}_i)/\sum_i t_iV_{k_1+1}(\mathcal{L}_i)$
is naturally isomorphic to some $\D^{r}[\mathbf{n}]$.

\begin{proposition}\label{prop9}
The complexes $V_{k_1+\bullet}(M)\otimes\Lambda^{\bullet}$ and 
$V_{k_1}(\mathcal{L}_{\bullet})/\sum_i t_iV_{k_1+1}(\mathcal{L}_{\bullet})$ are isomorphic in $DF(\D)$. Thus $V_{k_1}(\mathcal{L}_{\bullet})/\sum_i t_iV_{k_1+1}(\mathcal{L}_{\bullet})$ is a filtered free resolution of $i^*M$.
\end{proposition}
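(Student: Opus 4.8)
The plan is to construct an explicit morphism of complexes between the two candidates and then invoke the filtered comparison tools developed earlier in the paper, notably the Corollary to Proposition \ref{prop7} (filtered quasi-isomorphism $\Leftrightarrow$ $F_d$ quasi-isomorphism for complexes with good filtrations) together with the known non-filtered quasi-isomorphism of \cite{OT01b}, Theorem 5.3. First I would recall that applying the Koszul functor $-\otimes\Lambda^{\bullet}$ over $t_1,\dots,t_p$ to the bifiltered free resolution $\mathcal{L}_{\bullet}\to M$ yields a double complex whose total complex maps to the Koszul complex $M\otimes\Lambda^{\bullet}$; truncating at $V_{k_1+\bullet}$ on both sides gives a morphism of complexes $\Phi_{\bullet}$ from (the total complex associated to) $V_{k_1}(\mathcal{L}_{\bullet})/\sum_i t_i V_{k_1+1}(\mathcal{L}_{\bullet})$ to $V_{k_1+\bullet}(M)\otimes\Lambda^{\bullet}$, induced by $\psi_0:\mathcal{L}_0\to M$. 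This $\Phi_{\bullet}$ is $F$-adapted for the filtrations fixed just before the statement, because $\psi_0$ is a map of bifiltered modules and the filtrations on both sides are defined by restricting/quotienting $F_{d,k_1}$, $F_{d,k_1+i}$ accordingly.

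Next I would verify that for each fixed $d$, the map $F_d(\Phi_{\bullet})$ is a quasi-isomorphism. The point is that $F_{d,k}(\mathcal{L}_{\bullet})\to F_{d,k}(M)$ is exact for every $d,k$ by definition of a bifiltered free resolution, so after tensoring with $\Lambda^{\bullet}$ and passing to the subquotients one gets, for each $d$, precisely the situation of \cite{OT01b}, Theorem 5.3 (or \cite{oaku97}, Prop.\ 5.2 combined with \cite{OT01b}, \S5) applied "levelwise in $d$": the truncated Koszul complex computes the derived tensor product and the quotient-of-resolution complex computes the same object, the two being linked by $\Phi$. Concretely, I would argue that $F_d(V_{k_1}(\mathcal{L}_i)/\sum_j t_j V_{k_1+1}(\mathcal{L}_i)) = F_{d,k_1}(\mathcal{L}_i)/\sum_j t_j F_{d,k_1+1}(\mathcal{L}_i)$ and that the acyclicity of the Koszul complex of $V$-filtered pieces, established in \cite{OT01b} using the $b$-function bound $k_1$, holds verbatim for each $F_{d,\bullet}$-piece because the relevant operators $t_i$, $\partial_{t_i}$, and $t_i\partial_{t_i}$ respect the $F$-filtration (they lie in $F_1$, resp.\ $F_1$, resp.\ $F_2$) and the good bifiltration hypothesis $M=\D_{x,t}V_0(M)$ with $k_1\geq m_i$ makes the $F_{d,k_1+i}$ good filtrations of $V_{k_1+i}(M)$, as noted in the remark preceding the statement. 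Hence each $F_d(\Phi_{\bullet})$ is a quasi-isomorphism, so by Lemma 2.1 $\mathrm{gr}^F(\Phi_{\bullet})$ is a quasi-isomorphism, i.e.\ $\Phi_{\bullet}$ is a filtered quasi-isomorphism, which is exactly an isomorphism in $DF(\D)$.

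Finally, for the second assertion: since each $\mathcal{L}_i$ is $\D_{x,t}$-free and bifiltered, the module $V_{k_1}(\mathcal{L}_i)/\sum_j t_j V_{k_1+1}(\mathcal{L}_i)$ is, as recorded just before the statement, naturally isomorphic to some $\D^{r}[\mathbf{n}]$, so the complex in question is a complex of $F$-filtered free $\D$-modules. By the first part it is isomorphic in $DF(\D)$ to $V_{k_1+\bullet}(M)\otimes\Lambda^{\bullet}$, hence in particular to $i^*M$ in $D(\D)$ (this last is \cite{OT01b}, Theorem 5.3 again); and because $F_d$ of it is, for every $d$, the exact-in-high-degrees truncation computing $i^*M$ with its induced good filtration, the induced sequence $\cdots\to F_d(V_{k_1}(\mathcal{L}_1)/\!\cdots)\to F_d(V_{k_1}(\mathcal{L}_0)/\!\cdots)\to F_d(i^*M)\to 0$ is exact. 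That is precisely the definition of a filtered free resolution of $i^*M$ in the sense of Section 1. I expect the main obstacle to be the bookkeeping in the second paragraph: showing cleanly that the $F_d$-truncation of the Koszul complex of $V$-filtered pieces is acyclic in positive degrees, i.e.\ that the $b$-function argument of \cite{OT01b} is compatible with the order filtration. Everything else is an application of the homological-algebra lemmas already in hand.
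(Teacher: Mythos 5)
Your overall architecture is the same as the paper's: form the double complex obtained by applying the Koszul functor on $t_1,\dots,t_p$ to the bifiltered resolution $\mathcal{L}_{\bullet}\to M$, truncate at $V_{k_1+\bullet}$, and reduce everything to quasi-isomorphisms at the level of each $F_d$, concluding via the first Lemma of Section 2 and its Corollary. One imprecision first: there is no direct morphism of complexes from $V_{k_1}(\mathcal{L}_{\bullet})/\sum_j t_jV_{k_1+1}(\mathcal{L}_{\bullet})$ to $V_{k_1+\bullet}(M)\otimes\Lambda^{\bullet}$ (the target of $\psi_0$ composed with the quotient lands in $V_{k_1}(M)/\sum_j t_jV_{k_1+1}(M)$, not in $V_{k_1}(M)$); both complexes receive augmentation maps \emph{from} the total complex of the truncated double complex, so the isomorphism in $DF(\D)$ is realized by a roof of two filtered quasi-isomorphisms. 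Your parenthetical suggests you sense this, but the write-up should say it.

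The genuine gap is exactly the step you yourself flag as ``the main obstacle'': the exactness, for each fixed $d$ and each $i$, of the rows
\[
0\to F_{d,k_1+p}(\mathcal{L}_i)\otimes\Lambda^p\to\cdots\to
F_{d,k_1}(\mathcal{L}_i)\otimes\Lambda^{0}\to
\frac{F_{d,k_1}(\mathcal{L}_i)}{\sum_j t_j F_{d,k_1+1}(\mathcal{L}_i)}\to 0.
\]
You propose to obtain this by checking that the $b$-function argument of \cite{OT01b} is ``compatible with the order filtration.'' That points at the wrong mechanism: the $b$-function and the bound $k_1$ only enter to guarantee that the truncated Koszul complex of $M$ itself still represents $i^*M$, a fact quoted before the Proposition and not re-proved; they play no role in the row exactness, which concerns the \emph{free} modules $\mathcal{L}_i$. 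For those, the paper's argument is that $t_1,\dots,t_p$ is a regular sequence on $\mathbf{R}_{F,V}(\mathcal{L}_i)$, so the augmented Koszul complex over the Rees module is exact, and since it is bigraded (each $t_j$ having bidegree $(0,-1)$) its exactness holds in every bidegree, which is precisely the displayed sequence. Your alternative of applying \cite{OT01b}, Theorem 5.3 ``levelwise in $d$'' cannot work verbatim, since the pieces $F_{d,k}(\cdot)$ are only vector spaces, not $\D_{x,t}$-modules. Once the row exactness is supplied by the Rees-module argument, the rest of your proof (column exactness from the definition of a bifiltered free resolution, the identification of $F_d$ of the quotient complex, and the concluding appeal to the filtered quasi-isomorphism lemmas) is correct and coincides with the paper's.
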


\begin{proof}
Since $t_1,\dots,t_p$ is a regular sequence in $\mathcal{O}_{x,t}$, we have that the Koszul complex 
\[
0\to \mathbf{R}_{F,V}(\D_{x,t})\otimes\Lambda^p\stackrel{}{\to}\cdots\to 
\mathbf{R}_{F,V}(\D_{x,t})\otimes\Lambda^{0}\to 
\frac{\mathbf{R}_{F,V}(\D_{x,t})}{\sum_i t_i \mathbf{R}_{F,V}(\D_{x,t})}
\to 0
\]
is exact. We may replace $\D_{x,t}$ by the bifiltered free module $\mathcal{L}_i$, then the complex
\[
0\to \mathbf{R}_{F,V}(\mathcal{L}_i)\otimes\Lambda^p\stackrel{\delta}{\to}\cdots\to 
\mathbf{R}_{F,V}(\mathcal{L}_i)\otimes\Lambda^{0}\to 
\frac{\mathbf{R}_{F,V}(\mathcal{L}_i)}{\sum_i t_i \mathbf{R}_{F,V}(\mathcal{L}_i)}
\to 0
\]
is exact. Moreover it is bigraded, thus the complex
\[
0\to F_{d,k_1+p}(\mathcal{L}_i)\otimes\Lambda^p\stackrel{\delta}{\to}\cdots\to 
F_{d,k_1}(\mathcal{L}_i)\otimes\Lambda^{0}\to 
\frac{F_{d,k_1}(\mathcal{L}_i)}{\sum_i t_i F_{d,k_1+1}(\mathcal{L}_i)}\to 0
\]
is exact.
We have a commutative diagram:
\[
\xymatrix{
 & 0 & 0 & & \\
\cdots \ar[r] & F_{d,k_1+1}(M)\otimes\Lambda^1\ar[r]\ar[u] & 
F_{d,k_1}(M)\otimes\Lambda^0\ar[r]\ar[u] & 0 & \\
\cdots \ar[r] & F_{d,k_1+1}(\mathcal{L}_0)\otimes\Lambda^1\ar[r]\ar[u] & 
F_{d,k_1}(\mathcal{L}_0)\otimes\Lambda^0\ar[r]\ar[u] & 
\frac{F_{d,k_1}(\mathcal{L}_0)}{\sum_i t_i F_{d,k_1+1}(\mathcal{L}_0)}
\ar[r]\ar[u]
 & 0 \\
 \cdots\ar[r] & 
F_{d,k_1+1}(\mathcal{L}_1)\otimes\Lambda^1\ar[r]\ar[u] & 
F_{d,k_1}(\mathcal{L}_1)\otimes\Lambda^0\ar[r]\ar[u] & 
\frac{F_{d,k_1}(\mathcal{L}_1)}{\sum_i t_i F_{d,k_1+1}(\mathcal{L}_1)}
\ar[r]\ar[u]
 & 0 \\
 & \vdots\ar[u] & \vdots\ar[u] & \vdots\ar[u] & 
}
\]
We deduce that the complexes $(F_{d,k_1+i}(M)\otimes\Lambda^{i})_i$ and 
$(V_{k_1}(\mathcal{L}_{i})/\sum_i t_iV_{k_1+1}(\mathcal{L}_i))_i$ are both isomorphic to the complex associated with the double complex 
$$(F_{d,k_1+j}(\mathcal{L}_i)\otimes \Lambda^j)_{i,j}.$$
\end{proof}

\begin{corollary}\label{cor3}
The complex $V_{k_1}(\mathcal{L}_{\bullet})/\sum_i t_iV_{k_1+1}(\mathcal{L}_{\bullet})$ is strict if so is the complex 
$V_{k_1+\bullet}(M)\otimes\Lambda^{\bullet}.$
\end{corollary}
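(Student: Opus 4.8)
The plan is to deduce the statement from Corollary~\ref{cor2}, applied to the span of filtered maps that is implicitly built in the proof of Proposition~\ref{prop9}.

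First I would make that span explicit. Let $P_{\bullet}$ be the total complex of the double complex $(V_{k_1+j}(\mathcal{L}_i)\otimes\Lambda^j)_{i,j}$, filtered by $F_d(P_{\bullet})=\mathrm{Tot}\bigl(F_{d,k_1+j}(\mathcal{L}_i)\otimes\Lambda^j\bigr)$. Each term $V_{k_1+j}(\mathcal{L}_i)\otimes\Lambda^j$ is a $\D$-module, since $\D\subset V_0(\D_{x,t})$; the Koszul differentials (multiplication by the $t_i$, sending $V_k$ into $V_{k-1}$ and $F_{d,k}$ into $F_{d,k-1}$) and the differentials coming from the bifiltered free resolution are $F$-adapted, so $P_{\bullet}$ is an object of $CF(\D)$. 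The augmentations of the columns and of the rows of the double complex then provide morphisms in $CF(\D)$
\[
V_{k_1+\bullet}(M)\otimes\Lambda^{\bullet}\xleftarrow{a}P_{\bullet}\xrightarrow{b}V_{k_1}(\mathcal{L}_{\bullet})/\textstyle\sum_i t_iV_{k_1+1}(\mathcal{L}_{\bullet}),
\]
$a$ being induced by the augmentation $\mathcal{L}_{\bullet}\to M$ of the resolution and $b$ by the Koszul augmentations. The proof of Proposition~\ref{prop9} already shows that, for every $d$, $F_d(a)$ is a quasi-isomorphism — the $j$-th column of the double complex is, up to tensoring with the one-dimensional space $\Lambda^j$, the $F_{d,k_1+j}$-part of the bifiltered free resolution, hence exact — and likewise $F_d(b)$ is a quasi-isomorphism, the $i$-th row being the Koszul complex on the regular sequence $t_1,\dots,t_p$, hence exact. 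Taking the union over $d$, $a$ and $b$ are themselves quasi-isomorphisms.

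Then I would invoke Corollary~\ref{cor2} twice: applied to $a$ it gives that $P_{\bullet}$ is strict if and only if $V_{k_1+\bullet}(M)\otimes\Lambda^{\bullet}$ is, and applied to $b$ that $P_{\bullet}$ is strict if and only if $V_{k_1}(\mathcal{L}_{\bullet})/\sum_i t_iV_{k_1+1}(\mathcal{L}_{\bullet})$ is. Concretely, strictness of each of these three complexes amounts to the injectivity, for all $i$ and $d$, of the natural map $H_i(F_d(-))\to H_i(-)$, and the quasi-isomorphisms $a$, $b$ (together with their filtered counterparts $F_d(a)$, $F_d(b)$) identify these maps with one another. Chaining the two equivalences gives the claim — indeed an equivalence, of which only the asserted implication is used.

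The point requiring care, and which I would flag as the main obstacle, is that $P_{\bullet}$ is not a complex of good-filtered $\D$-modules: its terms are not finitely generated over $\D$. Hence one cannot run the argument through the abstract $DF(\D)$-isomorphism of Proposition~\ref{prop9} together with the strictness criterion phrased via $\mathrm{gr}^F$, since passing from ``$\mathrm{gr}^F(\alpha)$ a quasi-isomorphism'' to ``each $F_d(\alpha)$ a quasi-isomorphism'' relies on goodness of the filtrations. Working directly with $a$ and $b$ sidesteps this: Corollary~\ref{cor2}, and the lemma underlying it, use only that the morphism is a quasi-isomorphism inducing a quasi-isomorphism on every $F_d(\cdot)$, with no hypothesis on the filtrations themselves, so they apply verbatim to $a$ and $b$.
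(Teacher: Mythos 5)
Your argument is correct and is exactly the route the paper intends: Corollary~\ref{cor3} is stated without a separate proof, as an immediate consequence of the roof of quasi-isomorphisms (at the level of each $F_d$) built in the proof of Proposition~\ref{prop9}, combined with Corollary~\ref{cor2}; your remark that Corollary~\ref{cor2} and its underlying lemma impose no goodness hypothesis is precisely why the non--finitely-generated terms of the total complex cause no trouble. The only cosmetic slip is that $\Lambda^j$ has dimension $\binom{p}{j}$ rather than one, which changes nothing since exactness is preserved under tensoring with any finite-dimensional vector space.
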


Let us consider from now on a special case: $p=1$. The complex $V_{k_1+\bullet}(M)\otimes\Lambda^{\bullet}$ is $V_{k_1+1}(M)\stackrel{t}{\to}V_{k_1}(M)$. Let us assume furthermore that $t:M\to M$ is injective. The restriction $i^*M$ is concentrated in degree $0$ with $H_0 i^*M\simeq M/tM$.
Thus the complex
\begin{equation}\label{eq1}
\cdots\to\frac{V_{k_1}(\mathcal{L}_1)}{tV_{k_1+1}(\mathcal{L}_1)}
\to
\frac{V_{k_1}(\mathcal{L}_0)}{tV_{k_1+1}(\mathcal{L}_0)}
\to \frac{M}{tM}\to 0
\end{equation}
is a free resolution of the module $M/tM=(V_{k_1}(M)+tM)/tM$. That module is naturally endowed with the filtration  
\[
F_d(M/tM):=\frac{F_{d,k_1}(M)}{F_{d,k_1}(M)\cap tM}.
\]
and from Proposition \ref{prop9} and Corollary \ref{cor3} we obtain: 

\begin{proposition}\label{thm1}
Assume that $t:M\to M$ is injective and that for any $d$, 
$F_{d,k_1}(M)\cap tV_{k_1+1}(M)=tF_{d,k_1+1}(M)$. Then the complex (\ref{eq1}) is an $F$-filtered free resolution of $M/tM$, i.e.\ for any $d$ the complex
\begin{equation}\label{eq3}
\cdots\to\frac{F_{d,k_1}(\mathcal{L}_1)}{tF_{d,k_1+1}(\mathcal{L}_1)}
\to
\frac{F_{d,k_1}(\mathcal{L}_0)}{tF_{d,k_1+1}(\mathcal{L}_0)}
\to F_d\left(\frac{M}{tM}\right)\to 0
\end{equation}
is exact. The homogenization, with respect to $F$, of the resolution (\ref{eq1}) is as follows:
\begin{equation}\label{}
\cdots\to\frac{V_{k_1}(\mathbf{R}\mathcal{L}_1)}{tV_{k_1+1}(\mathbf{R}\mathcal{L}_1)}
\to
 \frac{V_{k_1}(\mathbf{R}\mathcal{L}_0)}{tV_{k_1+1}(\mathbf{R}\mathcal{L}_0)}
\to \mathbf{R}\left(\frac{M}{tM}\right)\to 0
\end{equation}
\end{proposition}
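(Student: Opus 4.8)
The strategy is to read the numerical hypothesis as a strictness statement, feed it into Corollary \ref{cor3}, and then let the exactness of (\ref{eq3}) follow formally from the behaviour of strict complexes recalled after Corollary \ref{cor2}. \emph{Step 1: the hypothesis says exactly that $V_{k_1+\bullet}(M)\otimes\Lambda^{\bullet}$ is strict.} Since $k_1$ was chosen so that $F_{d,k_1}(M)$ and $F_{d,k_1+1}(M)$ are good filtrations of $V_{k_1}(M)$ and $V_{k_1+1}(M)$, and since for $p=1$ the complex $V_{k_1+\bullet}(M)\otimes\Lambda^{\bullet}$ is the length-one complex $V_{k_1+1}(M)\stackrel{t}{\to}V_{k_1}(M)$, strictness at its single nontrivial spot reads $\mathrm{Im}(t)\cap F_d(V_{k_1}(M))=t\,F_d(V_{k_1+1}(M))$, i.e.\ $tV_{k_1+1}(M)\cap F_{d,k_1}(M)=tF_{d,k_1+1}(M)$, which is precisely the assumption. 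By Corollary \ref{cor3}, the complex $\mathcal{L}'_{\bullet}:=V_{k_1}(\mathcal{L}_{\bullet})/tV_{k_1+1}(\mathcal{L}_{\bullet})$ is therefore strict.

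\emph{Step 2: exactness of (\ref{eq3}).} Since $t:M\to M$ is injective, $i^*M$ lives in degree $0$, so $H_0(\mathcal{L}'_{\bullet})\simeq M/tM$ and $H_i(\mathcal{L}'_{\bullet})=0$ for $i\geq 1$. Applying to the strict complex $\mathcal{L}'_{\bullet}$ the property recalled after Corollary \ref{cor2} (following \cite{walther00}), each $H_i(F_d(\mathcal{L}'_{\bullet}))$ embeds into $H_i(\mathcal{L}'_{\bullet})$; hence $F_d(\mathcal{L}'_{\bullet})$ is exact except in degree $0$, where its homology is the induced good filtration on $M/tM$. It then remains to identify this filtration with the one of the statement: $H_0(F_d(\mathcal{L}'_{\bullet}))$ is the image of $F_d(\mathcal{L}'_0)=F_{d,k_1}(\mathcal{L}_0)/tF_{d,k_1+1}(\mathcal{L}_0)$ in $M/tM$, and since $\psi_0$ is a bifiltered surjection, $\psi_0(F_{d,k_1}(\mathcal{L}_0))=F_{d,k_1}(M)$, so this image equals $(F_{d,k_1}(M)+tM)/tM=F_{d,k_1}(M)/(F_{d,k_1}(M)\cap tM)=F_d(M/tM)$. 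Thus (\ref{eq3}) is exact for every $d$, i.e.\ (\ref{eq1}) is an $F$-filtered free resolution of $M/tM$. This matching of the two good filtrations on $M/tM$ is the only delicate point; observe in particular that the hypothesis enters solely through Step 1, to guarantee strictness of $V_{k_1+\bullet}(M)\otimes\Lambda^{\bullet}$.

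\emph{Step 3: homogenization.} One applies $\mathbf{R}$ (homogenization with respect to $F$) to (\ref{eq1}); by Section 1 this turns an $F$-filtered free resolution into a graded free resolution, and indeed $\mathbf{R}\mathcal{L}'_i=\bigoplus_d F_d(\mathcal{L}'_i)T^d$ with $H_j(\mathbf{R}\mathcal{L}'_{\bullet})=\bigoplus_d H_j(F_d(\mathcal{L}'_{\bullet}))T^d$, so exactness of the homogenized complex is exactly the exactness of (\ref{eq3}) for all $d$ established in Step 2. One then rewrites the terms: since $V_k(\mathbf{R}\mathcal{L}_i)=\bigoplus_d F_{d,k}(\mathcal{L}_i)T^d$ and $t$ has $F$-order $0$, one gets $V_{k_1}(\mathbf{R}\mathcal{L}_i)/tV_{k_1+1}(\mathbf{R}\mathcal{L}_i)=\bigoplus_d\bigl(F_{d,k_1}(\mathcal{L}_i)/tF_{d,k_1+1}(\mathcal{L}_i)\bigr)T^d=\mathbf{R}\bigl(V_{k_1}(\mathcal{L}_i)/tV_{k_1+1}(\mathcal{L}_i)\bigr)$, and likewise $\mathbf{R}(M/tM)=\bigoplus_d F_d(M/tM)T^d$, which gives the displayed complex. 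The main obstacle, as noted, is the filtration identification in Step 2; everything else is a direct application of Corollaries \ref{cor3} and \ref{cor2} and of the $\mathbf{R}$-functor formalism of Section 1.
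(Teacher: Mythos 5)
Your proof is correct and follows essentially the same route as the paper, which simply derives the proposition from Proposition \ref{prop9} and Corollary \ref{cor3}: you make explicit that the hypothesis is the strictness of $V_{k_1+\bullet}(M)\otimes\Lambda^{\bullet}$, transfer it to the free complex via Corollary \ref{cor3}, and use the strictness remark after Corollary \ref{cor2} together with the identification of $F_d(M/tM)$ to get exactness of (\ref{eq3}) and its homogenization.
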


Let us remark that the minimality of the bifiltered free resolution $\cdots\to\mathcal{L}_0\to M\to 0$ does not imply the minimality of the $F$-filtered free resolution (\ref{eq1}), as we shall see in Section 4.

Let us give suficient conditions for the assumnptions in \ref{thm1}.

\begin{proposition}\label{prop10}
The assumptions in Proposition \ref{thm1} hold if the map $t:\mathrm{gr}^F(M)\to \mathrm{gr}^F(M)$ is injective and for any $d,k$, $F_{d,k}(M)=F_d(M)\cap V_k(M)$.
\end{proposition}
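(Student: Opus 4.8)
The plan is to derive the two hypotheses of Proposition \ref{thm1} — the injectivity of $t:M\to M$ and the identity $F_{d,k_1}(M)\cap tV_{k_1+1}(M)=tF_{d,k_1+1}(M)$ — from the two assumptions made here. The only structural facts I would use about multiplication by $t$ are that it is an operator of $F$-order $0$ and of $V$-order $-1$: hence $tF_d(M)\subseteq F_d(M)$ and $tV_{k+1}(M)\subseteq V_k(M)$, so that $t$ induces a degree-preserving endomorphism of $\mathrm{gr}^F(M)$, which is precisely the map assumed injective.

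The first step is a symbol computation that I would isolate once and for all. For $m\in M$ nonzero let $e(m)$ be the least integer with $m\in F_{e(m)}(M)$, so that the principal symbol $\sigma(m)\in\mathrm{gr}^F_{e(m)}(M)$ is nonzero; by injectivity of $t$ on $\mathrm{gr}^F(M)$ we get $t\,\sigma(m)\neq 0$, which means exactly that $tm\notin F_{e(m)-1}(M)$. Two consequences follow at once. First, $tm\neq 0$, so $t:M\to M$ is injective, the first hypothesis of Proposition \ref{thm1}. Second, $t$ is strict for the $F$-filtration, i.e.\ $tM\cap F_d(M)=tF_d(M)$ for every $d$: indeed if $x=ty\in F_d(M)$ with $y\notin F_d(M)$, then $e(y)>d$, whence $x=ty\notin F_{e(y)-1}(M)$, yet $x\in F_d(M)\subseteq F_{e(y)-1}(M)$ — a contradiction; so $y\in F_d(M)$, and the nontrivial inclusion $tM\cap F_d(M)\subseteq tF_d(M)$ holds (the other being obvious).

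The second step combines $F$-strictness with the compatibility hypothesis $F_{d,k}(M)=F_d(M)\cap V_k(M)$ to produce the second hypothesis of Proposition \ref{thm1}. Since $tV_{k_1+1}(M)\subseteq V_{k_1}(M)$, one has $F_{d,k_1}(M)\cap tV_{k_1+1}(M)=F_d(M)\cap V_{k_1}(M)\cap tV_{k_1+1}(M)=F_d(M)\cap tV_{k_1+1}(M)$. Given $x$ in this set, write $x=ty$ with $y\in V_{k_1+1}(M)$; as $x\in tM\cap F_d(M)=tF_d(M)$ and $t$ is injective, $y\in F_d(M)$, hence $y\in F_d(M)\cap V_{k_1+1}(M)=F_{d,k_1+1}(M)$ and $x\in tF_{d,k_1+1}(M)$. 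The reverse inclusion $tF_{d,k_1+1}(M)\subseteq F_{d,k_1}(M)\cap tV_{k_1+1}(M)$ is immediate, since $t\bigl(F_d(M)\cap V_{k_1+1}(M)\bigr)\subseteq tF_d(M)\cap tV_{k_1+1}(M)\subseteq F_d(M)\cap V_{k_1}(M)=F_{d,k_1}(M)$. This yields the equality, and hence the proposition.

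I do not anticipate a genuine obstacle: the argument is a chain of elementary manipulations of filtrations. The one point requiring care is the bookkeeping of which filtration $t$ shifts ($V$, by $-1$) and which it preserves ($F$), together with the standard but worth-stating lemma that injectivity of $\mathrm{gr}^F(t)$ forces $F$-strictness of $t$; I would prove this inline rather than cite it, since it is exactly the two-line symbol computation above.
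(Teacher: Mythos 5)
Your proof is correct, and its first half is the same as the paper's: the paper reduces Proposition \ref{prop10} to Lemma \ref{lemma1} via an intermediate lemma (stated without proof) asserting that injectivity of $t$ on $\mathrm{gr}^F(M)$ is equivalent to injectivity of $t$ on $M$ together with the $F$-strictness $tM\cap F_d(M)=tF_d(M)$; your symbol computation supplies exactly the implication that is needed (it tacitly uses that a good filtration is bounded below, so that $\mathrm{ord}^F(m)$ exists -- true here, but worth half a sentence). The second half genuinely diverges. The paper's Lemma \ref{lemma1} proves the stronger identity $tM\cap F_{d,k_1}(M)=tF_{d,k_1+1}(M)$, and for this it must first establish the $V$-strictness $tM\cap V_{k_1}(M)=tV_{k_1+1}(M)$, which it imports from the injectivity of equation (5.2) of \cite{oaku97}, i.e.\ from the hypothesis that $b(k)\neq 0$ for $k>k_1$. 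You instead prove only the identity literally required by Proposition \ref{thm1}, namely $F_{d,k_1}(M)\cap tV_{k_1+1}(M)=tF_{d,k_1+1}(M)$: since your element $x=ty$ comes with $y$ already in $V_{k_1+1}(M)$, the $V$-component of membership in $F_{d,k_1+1}(M)=F_d(M)\cap V_{k_1+1}(M)$ is free and only the $F$-component needs the strictness from your first step. Your argument is therefore self-contained and makes no use of the $b$-function condition at this point. What the paper's stronger version buys is the equality $F_{d,k_1}(M)\cap tM=F_{d,k_1}(M)\cap tV_{k_1+1}(M)$, which reconciles the filtration $F_d(M/tM)=F_{d,k_1}(M)/(F_{d,k_1}(M)\cap tM)$ introduced before Proposition \ref{thm1} with the filtration on $H_0$ of the strict complex $V_{k_1+1}(M)\stackrel{t}{\to}V_{k_1}(M)$; your weaker statement does not by itself give that compatibility, but it does establish the proposition as stated.
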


That follows from the two following lemmas.

\begin{lemma}\label{lemma1}
Let us assume
\begin{enumerate}
\item The map $t:M\to M$ is injective,
\item For any $d$, $F_d(M)\cap tM=tF_d(M)$,
\item For any $d,k$, $F_{d,k}(M)=F_d(M)\cap V_k(M)$. 
\end{enumerate}
Then for any $d$, $tM\cap F_{d,k_1}(M)=  tF_{d,k_1+1}(M)$,
\end{lemma}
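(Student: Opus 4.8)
The plan is to prove the chain of inclusions $tF_{d,k_1+1}(M)\subseteq tM\cap F_{d,k_1}(M)\subseteq tF_{d,k_1+1}(M)$, so that equality holds throughout. The first inclusion is immediate: if $m\in F_{d,k_1+1}(M)$ then $tm\in tM$, and since $t$ has $V$-order $-1$ and $F$-order $0$ we get $tm\in F_{d,k_1}(M)$; in fact $tm\in F_{d-0,k_1+1-1}(M)=F_{d,k_1}(M)$ using $F_{d,k}(\D_{x,t})F_{d',k'}(M)\subseteq F_{d+d',k+k'}(M)$ with $t\in F_{0,-1}(\D_{x,t})$. So the content is entirely in the reverse inclusion.

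For the reverse inclusion, I would start from an element $x\in tM\cap F_{d,k_1}(M)$, so $x=tm$ for a unique $m\in M$ (uniqueness by hypothesis (1)). The goal is to show $m\in F_{d,k_1+1}(M)$, which by hypothesis (3) means showing both $m\in F_d(M)$ and $m\in V_{k_1+1}(M)$. For the $F$-part: $x=tm\in F_d(M)\cap tM=tF_d(M)$ by hypothesis (2), so $x=tm'$ for some $m'\in F_d(M)$, and injectivity of $t$ forces $m=m'\in F_d(M)$. For the $V$-part: here I would invoke the characterization of $k_1$. Recall $k_1$ was chosen so that $b(k)\neq 0$ for $k>k_1$, where $b(s)$ annihilates $\mathrm{gr}^V_0(M)$; a standard consequence (the defining property that makes the truncation work, cf. the cited \cite{oaku97}, \cite{OT01b}) is that $t:\mathrm{gr}^V_k(M)\to\mathrm{gr}^V_{k-1}(M)$ is bijective for $k>k_1+1$ and at least injective/surjective in the relevant range; more to the point, $tM\cap V_{k}(M)=tV_{k+1}(M)$ holds for $k\geq k_1$ because $t\partial_t-k$ acting on $V_k/V_{k+1}$ is... — actually the cleanest route is: since $x=tm\in V_{k_1}(M)$ and $t:M\to M$ is injective, one shows inductively that $m\in V_{k_1+1}(M)$ using that for $k\ge k_1$, $t\cdot\mathrm{gr}^V_{k+1}(M)\to\mathrm{gr}^V_k(M)$ is injective (equivalently $tM\cap V_k(M)=tV_{k+1}(M)$), which follows from $b(t\partial_t)$ being invertible on $\mathrm{gr}^V_k$ for $k>k_1$ combined with the relation $\partial_t t=t\partial_t+1$ on these graded pieces.

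The main obstacle I anticipate is pinning down exactly the property of $k_1$ that yields $tM\cap V_k(M)=tV_{k+1}(M)$ for $k\ge k_1$, and verifying it cleanly rather than hand-waving. The precise argument: if $y\in M$ with $ty\in V_k(M)$, write $y$ with its $V$-order $\ell$; if $\ell>k+1$ then already $y\in V_{k+1}(M)$ and we are done, otherwise the class of $ty$ in $\mathrm{gr}^V_{\ell-1}(M)$ is $t\cdot[y]$ with $[y]\neq 0$ in $\mathrm{gr}^V_\ell(M)$, and since $ty\in V_k(M)$ with $\ell-1\ge k$ we need this class to vanish whenever $\ell-1>k$, i.e. $\ell-1\ge k+1$, i.e. $\ell\ge k+2>k_1+1$; then injectivity of $t:\mathrm{gr}^V_\ell(M)\to\mathrm{gr}^V_{\ell-1}(M)$ for $\ell-1>k_1$ (which is where $b(s)$ is used, via the fact that on $\mathrm{gr}^V$ the operator $t$ acts with $\partial_t t=t\partial_t+1$ and $b$ invertible forces the action of $t$ to be injective in the appropriate degrees) gives $[y]=0$, a contradiction; hence $\ell\le k+1$, i.e. $y\in V_{k+1}(M)$, proving $tM\cap V_k(M)\subseteq tV_{k+1}(M)$ and hence equality. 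Combining the $F$-part and $V$-part via hypothesis (3) gives $m\in F_d(M)\cap V_{k_1+1}(M)=F_{d,k_1+1}(M)$, so $x=tm\in tF_{d,k_1+1}(M)$, completing the proof.
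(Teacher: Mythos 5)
Your proof is correct and follows essentially the same route as the paper's: the whole argument reduces to the identity $tM\cap V_{k_1}(M)=tV_{k_1+1}(M)$, after which assumption (2) together with injectivity of $t$ handles the $F$-component and assumption (3) combines the two. The only difference is cosmetic — the paper quotes that identity directly from the injectivity of equation (5.2) in \cite{oaku97} and phrases the rest as a chain of set inclusions, whereas you re-derive the identity from the $b$-function and argue element-wise.
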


\begin{proof}
 By the injectivity of equation (5.2) in \cite{oaku97}, we have 
 $tM\cap V_{k_1}(M)=tV_{k_1+1}(M)$. Then 
\begin{eqnarray*}
tM\cap F_{d,k_1}(M) & \subset & tV_{k_1+1}(M)\cap F_{d}(M)\\
 & \subset &  tV_{k_1+1}(M)\cap tF_{d}(M)\quad\textrm{by assumption}\ 2\\
& \subset &  t(V_{k_1+1}(M)\cap F_{d}(M))\quad\textrm{by assumption}\ 1\\
& \subset &  tF_{d,k_1+1}(M)\quad\textrm{by assumption}\ 3.
\end{eqnarray*}
\end{proof}

\begin{lemma}
The conditions 1.\ and 2.\ of the preceding Lemma are satisfied if and only if the map $t:\mathrm{gr}^F(M)\to \mathrm{gr}^F(M)$ is injective. 
\end{lemma}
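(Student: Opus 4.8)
The statement to prove is that conditions 1 and 2 of Lemma \ref{lemma1} — namely that $t:M\to M$ is injective and that $F_d(M)\cap tM=tF_d(M)$ for every $d$ — hold simultaneously if and only if $t:\mathrm{gr}^F(M)\to\mathrm{gr}^F(M)$ is injective. My plan is to unwind both statements in terms of the filtration $F_d(M)$ and a symbolic-calculus description of multiplication by $t$, and to observe that the conjunction of 1 and 2 is exactly the statement that $t$ is a \emph{strict} injection for the filtered module $M$, which is a standard reformulation of injectivity on the associated graded.

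First I would fix notation: write $\sigma_d:F_d(M)\to\mathrm{gr}^F_d(M)=F_d(M)/F_{d-1}(M)$ for the symbol map. Multiplication by $t$ has $F$-degree $0$ (since $t\in F_0(\D_{x,t})$ and, in the setting of Proposition \ref{thm1}, $M$ carries the good $F$-filtration $F_d(M)=\cup_k F_{d,k}(M)$), so it induces $\bar t:\mathrm{gr}^F_d(M)\to\mathrm{gr}^F_d(M)$ with $\bar t\,\sigma_d(m)=\sigma_d(tm)$.

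For the ``if'' direction, assume $\bar t$ is injective. Injectivity of $t$ on $M$ itself follows by a standard filtration-degree induction: if $tm=0$ with $m\in F_d(M)$, $m\notin F_{d-1}(M)$, then $\sigma_d(m)\neq 0$ but $\bar t\,\sigma_d(m)=\sigma_d(tm)=0$, contradicting injectivity of $\bar t$; hence $m\in\bigcap_d F_d(M)$-type reasoning forces $m=0$ using exhaustiveness and the fact that $F_d(M)=0$ for $d$ small (the good filtration is bounded below on each finitely generated piece, or more simply the induction terminates). For condition 2, the inclusion $tF_d(M)\subset F_d(M)\cap tM$ is obvious; for the reverse, take $y=tm\in F_d(M)$ and argue by downward induction on the filtration degree $e$ of $m$: if $e\le d$ we are done; if $e>d$, then $\sigma_e(tm)=\bar t\,\sigma_e(m)$ must vanish because $tm\in F_d(M)\subset F_{e-1}(M)$, so by injectivity of $\bar t$ we get $\sigma_e(m)=0$, i.e.\ $m\in F_{e-1}(M)$, and we iterate until the degree of (the chosen representative of) $m$ drops to $d$; combined with injectivity of $t$ already proven, this shows $y\in tF_d(M)$.

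For the ``only if'' direction, assume 1 and 2. Suppose $\bar t\,\sigma_d(m)=0$ for some $m\in F_d(M)$; this means $tm\in F_{d-1}(M)$. By condition 2 applied at level $d-1$, $tm\in F_{d-1}(M)\cap tM=tF_{d-1}(M)$, so $tm=tm'$ with $m'\in F_{d-1}(M)$; by injectivity of $t$ (condition 1), $m=m'\in F_{d-1}(M)$, hence $\sigma_d(m)=0$. Thus $\bar t$ is injective. The only mild subtlety — and the one place I expect to slow down — is making the induction in the ``if'' direction airtight: one must use that the good $F$-filtration on $M$ is bounded below (which holds here since $F_d(M)=\sum F_{d-n_i}(\D)f_i$ and $F_e(\D)=0$ for $e<0$, forcing $F_d(M)=0$ for $d<\min_i n_i$), so that the descent on filtration degree terminates and genuinely yields $m=0$ rather than merely $m\in\bigcap_d F_d(M)$. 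Everything else is a routine diagram-free computation with symbols.
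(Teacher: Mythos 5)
The paper states this lemma without proof, so there is no argument of the author's to compare against; your proof is correct and supplies exactly the standard ``strict injectivity $\Leftrightarrow$ injectivity on $\mathrm{gr}^F$'' argument that is evidently intended. Both directions check out, including your care with the good filtration being bounded below so that $\mathrm{ord}^F(m)$ is well defined and the descent terminates.
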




Let us give more information on the assumptions of Proposition \ref{prop10}.
 
\begin{lemma}[\cite{moi10}, Lemma 1.1]\label{lemma3}
$\forall d,k, F_{d,k}(M)=F_d(M)\cap V_k(M)$ holds if and only if the map induced by $h$ on $\mathrm{gr}^V(\mathbf{R}M)$ is injective.
\end{lemma}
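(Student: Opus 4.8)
The plan is to translate injectivity of $h$ on $\mathrm{gr}^V(\mathbf{R}M)$ into an explicit condition on the bifiltration, and then to compare that condition with $(\star)$, namely $F_{d,k}(M)=F_d(M)\cap V_k(M)$ for all $d,k$.

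First I would make the map induced by $h$ explicit. Under the isomorphism $\mathbf{R}\D_{x,t}\simeq\D_{x,t}^{(h)}$ the element $h$ is the homogenization variable $T$, so on $\mathbf{R}M=\bigoplus_d F_d(M)T^d$ multiplication by $h$ sends $mT^d$, with $m\in F_d(M)$, to $mT^{d+1}$ via the inclusion $F_d(M)\subset F_{d+1}(M)$. As $h=T$ lies in $V_0$, it preserves $V_k(\mathbf{R}M)=\bigoplus_d F_{d,k}(M)T^d$ and therefore acts on $\mathrm{gr}^V_k(\mathbf{R}M)=\bigoplus_d\bigl(F_{d,k}(M)/F_{d,k-1}(M)\bigr)T^d$, carrying the $T^d$-summand to the $T^{d+1}$-summand through the map $\iota_{d,k}$ induced by $F_{d,k}(M)\hookrightarrow F_{d+1,k}(M)$. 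Since $\ker\iota_{d,k}=\bigl(F_{d,k}(M)\cap F_{d+1,k-1}(M)\bigr)/F_{d,k-1}(M)$, injectivity of $h$ on $\mathrm{gr}^V(\mathbf{R}M)$ is equivalent to
\[
(\ast)\qquad F_{d,k}(M)\cap F_{d+1,k-1}(M)=F_{d,k-1}(M)\qquad\text{for all }d,k.
\]

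It then remains to show $(\ast)\Leftrightarrow(\star)$; in each of $(\ast)$ and $(\star)$ one inclusion is automatic, so only the reverse is at stake. The direction $(\star)\Rightarrow(\ast)$ is immediate: using $F_d(M)\subset F_{d+1}(M)$ and $V_{k-1}(M)\subset V_k(M)$ one gets $F_{d,k}(M)\cap F_{d+1,k-1}(M)=\bigl(F_d(M)\cap V_k(M)\bigr)\cap\bigl(F_{d+1}(M)\cap V_{k-1}(M)\bigr)=F_d(M)\cap V_{k-1}(M)=F_{d,k-1}(M)$.

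The substantial point, and the step I expect to be the main obstacle, is $(\ast)\Rightarrow(\star)$, i.e.\ $F_d(M)\cap V_k(M)\subset F_{d,k}(M)$, which I would prove by a finite double descent. Given $m\in F_d(M)\cap V_k(M)$, exhaustivity of the bifiltration yields integers $b$ with $m\in F_{d,b}(M)$ and $a$ with $m\in F_{a,k}(M)$; the cases $b\le k$ or $a\le d$ are trivial, so I may assume $b>k$ and $a>d$. The inner descent lowers the $F$-degree at fixed $V$-degree $b$: since $m\in F_{a,b-1}(M)$ (because $k\le b-1$) and $m\in F_{c,b}(M)$ for every $d\le c\le a-1$ (because $m\in F_{d,b}(M)$), applying $(\ast)$ successively at $c=a-1,a-2,\dots,d$ gives $m\in F_{a-1,b-1}(M),\dots,m\in F_{d,b-1}(M)$. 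Thus $m\in F_{d,b}(M)\cap V_k(M)$ with $b>k$ forces $m\in F_{d,b-1}(M)$, and an outer downward induction on $b$ from its initial value down to $k$ produces $m\in F_{d,k}(M)$. Both descents terminate because $a$ and $b$ are finite, which is exactly where exhaustivity is used; combining this with the first paragraph yields the asserted equivalence.
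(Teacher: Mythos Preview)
The paper does not supply a proof of this lemma; it is merely quoted from \cite{moi10}, Lemma~1.1, so there is nothing in the present text to compare your argument against. That said, your proof is correct. The identification of the $h$-action on $\mathrm{gr}^V(\mathbf{R}M)$ with the family of maps $\iota_{d,k}$ induced by $F_{d,k}(M)\hookrightarrow F_{d+1,k}(M)$, and the resulting reformulation $(\ast)$ of injectivity, are exactly right. The implication $(\star)\Rightarrow(\ast)$ is the easy direction, and your double descent for $(\ast)\Rightarrow(\star)$ is sound: at each outer step you keep the anchor $m\in F_{a,k}(M)\subset F_{a,b-1}(M)$ available (since $k\le b-1$ throughout), and the inner descent from $c=a-1$ down to $c=d$ is a finite chain of applications of $(\ast)$, each lowering the $F$-index by one at fixed $V$-index $b$. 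Exhaustivity of the bifiltration is precisely what furnishes the starting integers $a$ and $b$, so both inductions terminate.
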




\begin{remark}
The conditions in Proposition \ref{prop10} depend on the good bifiltration. The first condition is that $t:\mathrm{gr}^F(M)\to\mathrm{gr}^F(M)$ is injective. Let $M=\D_{x,t}^2/(\D_{x,t}.(t,1))$. If we define 
\[
F_d(M)=F_{d-1}(\D_{x,t}).\overline{(1,0)}+F_d(\D_{x,t}).\overline{(0,1)},	
\]
then $t$ is not injective on $\mathrm{gr}^F(M)\simeq\mathrm{gr}^F(\D_{x,t})^2/(t,0)$. 
On the other hand, defining
\[
F_d(M)=F_{d}(\D_{x,t}).\overline{(1,0)}+F_{d-1}(\D_{x,t}).\overline{(0,1)},	
\]
then $t$ is injective on $\mathrm{gr}^F(M)\simeq\mathrm{gr}^F(\D_{x,t})^2/(0,1)$.   

The second condition is that for any $d,k$, $F_{d,k}(M)=F_d(M)\cap V_k(M)$, which is equivalent to the injectivity of the map induced by $h$ on 
$\mathrm{gr}^V(\mathbf{R}(M))$. Let us take $M=\D_{x,t}$ endowed with the usual bifiltration, then the above condition is satisfied. Let $M'=\D_{x,t}^2/(1,1)$. We have an isomorphism $M\simeq M'$ given by $1\mapsto \overline{(1,0)}$. Let us endow $M'$ with the good bifiltration defined by
\[
F_{d,k}(M')=F_{d-1,k}(\D_{x,t}).\overline{(1,0)}+
F_{d,k-1}(\D_{x,t}).\overline{(0,1)}.
\] 
Then $\mathbf{R}(M)\simeq (\D_{x,t}^{(h)})^2[0,1]/(1,h)$
and $\mathrm{gr}^V(\mathbf{R}(M))\simeq 
(\mathrm{gr}^V(\D_{x,t}^{(h)}))^2/(0,h))$, on which the map induced by $h$ is not injective. 
\end{remark}

To end this section, let us recall the notion of involutive bases, which we will use in Section 4.

\begin{definition}
Let $M$ be a filtered $\D$-module. If $0\neq m\in M$, we define the $F$-order of $m$ by
$\mathrm{ord}^F(m)=\mathrm{min} \{d, m\in F_d(M)\}$
and the $F$-symbol $\sigma^F(m)$ by the class of $m$ in $\mathrm{gr}_{\mathrm{ord}^F(m)}^F(M)$. If it is clear in the context we will simply note $\sigma(m)$.

Let $I$ be an ideal of $\D$, endowed with the induced filtration.
We say that $(P_1,\dots,P_r)$ is an $F$-involutive base of $I$ if for any $P\in I$, there exist $Q_1,\dots,Q_r$ such that $P=\sum Q_iP_i$ and for any $i$, $\mathrm{ord}^F(Q_iP_i)\leq \mathrm{ord}^F(P)$.
\end{definition}

We have the following useful characterization of involutive bases. 

\begin{proposition}\label{prop5}
Let $I$ be generated by $P_1,\dots,P_r$ and let $n_i=\mathrm{ord}^F(P_i)$. Assume that we have homogeneous elements 
\[
S_i=\sum_j S_{i,j}e_j\in\mathrm{gr}^F(\D)^r[\mathbf{n}]
\]
 which generate the relations between $\sigma(P_1),\dots,\sigma(P_r)$, such that for any $i$, there exists a relation 
 \[
 R_i=\sum_j R_{i,j}e_j\in \D[\mathbf{n}]
 \]
  between $P_1,\dots,P_r$, such that $\sigma(R_i)=S_i$. Then $(P_1,\dots,P_r)$ is an $F$-involutive base of $I$.
\end{proposition}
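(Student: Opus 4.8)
\textbf{Proof plan for Proposition \ref{prop5}.}

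The plan is to show that the hypothesis on the $S_i$ lifting to honest relations $R_i$ among the $P_j$ is exactly what is needed to make the division algorithm in $\D$ produce, for an arbitrary $P\in I$, a representation $P=\sum Q_iP_i$ without any jump in $F$-order. First I would set up the natural surjection $\phi:\D^r[\mathbf{n}]\to I$, $e_i\mapsto P_i$, which is $F$-adapted by the choice $n_i=\mathrm{ord}^F(P_i)$; the induced map $\mathrm{gr}^F(\phi):\mathrm{gr}^F(\D)^r[\mathbf{n}]\to\mathrm{gr}^F(I)$ sends $e_i$ to $\sigma(P_i)$. The claim ``$(P_1,\dots,P_r)$ is an $F$-involutive base'' is equivalent to the statement that $\mathrm{gr}^F(\phi)$ is surjective, i.e.\ that $\sigma(P_1),\dots,\sigma(P_r)$ generate $\mathrm{gr}^F(I)$ as a $\mathrm{gr}^F(\D)$-module; so the real content is to promote ``they generate $I$'' to ``their symbols generate $\mathrm{gr}^F(I)$''.

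Next I would run the standard lifting argument. Take $0\ne P\in I$ with $d=\mathrm{ord}^F(P)$. Since the $P_i$ generate $I$ we may write $P=\sum Q_i^{(0)}P_i$; let $d_0=\max_i\mathrm{ord}^F(Q_i^{(0)}P_i)\ge d$. If $d_0=d$ we are done; otherwise set $J=\{i:\mathrm{ord}^F(Q_i^{(0)}P_i)=d_0\}$ and look at the top-order part: $\sum_{i\in J}\sigma(Q_i^{(0)})\sigma(P_i)=0$ in $\mathrm{gr}^F_{d_0}(\D)^r[\mathbf{n}]$ modulo lower order, hence $T:=\sum_{i}\sigma(Q_i^{(0)})e_i$ (with the convention $\sigma(Q_i^{(0)})=0$ for $i\notin J$, i.e.\ truncating each $Q_i^{(0)}$ to its degree-$(d_0-n_i)$ part) is a homogeneous relation of degree $d_0$ among the $\sigma(P_j)$. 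By hypothesis the $S_i$ generate all such relations, so $T=\sum_k a_k S_k$ with $a_k\in\mathrm{gr}^F(\D)$ homogeneous of the appropriate degrees. Lift each $a_k$ to $A_k\in\D$ with $\sigma(A_k)=a_k$ and replace $Q^{(0)}=(Q_i^{(0)})_i$ by $Q^{(1)}=Q^{(0)}-\sum_k A_k R_k$. Because $\sigma(R_k)=S_k$ and $\sigma(A_k)=a_k$, the element $\sum_k A_kR_k$ has the same degree-$d_0$ symbol $T$ as $Q^{(0)}$, so all the degree-$d_0$ contributions cancel and $\max_i\mathrm{ord}^F(Q_i^{(1)}P_i)\le d_0-1$; moreover $\sum_i Q_i^{(1)}P_i=P$ still, since each $R_k$ is a relation among the $P_j$.

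Finally I would note that this reduction strictly decreases the quantity $\max_i\mathrm{ord}^F(Q_iP_i)$ at each step, and since it is bounded below by $d=\mathrm{ord}^F(P)$ and takes integer values, after finitely many steps we reach a representation $P=\sum Q_iP_i$ with $\mathrm{ord}^F(Q_iP_i)\le d$ for all $i$, which is precisely the defining property of an $F$-involutive base. The only point requiring a little care — and the place I expect to be the main obstacle — is the bookkeeping in the cancellation step: one must be careful that ``truncating $Q_i^{(0)}$ to its top homogeneous part'' and ``the symbol $\sigma(R_k)=S_k$'' interact correctly so that the degree-$d_0$ terms of $Q^{(0)}$ and of $\sum_k A_kR_k$ genuinely agree, rather than agreeing only up to a further relation. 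This is handled by working in $\D^{(h)}$ (or equivalently with $\mathrm{gr}^F$), where everything is homogeneous and the matching of top-order symbols becomes an honest equality of homogeneous elements; the argument is then the usual homogeneous Buchberger-style lifting, and convergence/termination is guaranteed by the integer-valued, bounded-below order function rather than by any Noetherian chain condition.
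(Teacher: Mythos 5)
Your proposal is correct and follows essentially the same route as the paper: write $P=\sum Q_jP_j$, observe that if the order of $\sum Q_je_j$ exceeds $\mathrm{ord}^F(P)$ then its top symbol is a relation among the $\sigma(P_j)$, express it through the $S_i$, subtract the lifted combination $\sum b_iR_i$ to strictly lower the order, and conclude. The paper phrases the descent as a minimality-plus-contradiction argument rather than an explicit terminating iteration, but this is the same proof.
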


\begin{proof}
Let $P=\sum Q_jP_j$ and assume that $\mathrm{ord}(P)<\mathrm{ord}(\sum Q_je_j)=d$ with $d$ minimal. Then $\sigma(\sum Q_jP_j)=0$, which leads to a relation $\sigma(\sum Q_je_j)$ between the ($\sigma(P_j))$. Thus for any $i$ there exists $a_i=\sigma(b_i)$ such that  $\sigma(\sum Q_je_j)=\sum a_iS_i=\sigma(\sum b_iR_i)$.
Then 
\[
P=(\sum Q_je_j-\sum b_iR_i).(P_1,\dots,P_r)
\]
 with $\mathrm{ord}(\sum Q_je_j-\sum b_iR_i)<d$, a contradiction.
\end{proof}

We will say that the relation $S_i$ is \emph{lifted} by the relation $R_i$ such that $\sigma(R_i)=S_i$. 
In fact it is easy to see that the converse of the lemma holds: if $P_1,\dots,P_r$ is an $F$-involutive base, then every homogeneous relation between $(\sigma(P_i))$ can be lifted by a relation between the $(P_i)$.

\section{Application to algebraic local cohomology}

Our aim is to give presentations of some algebraic local cohomology module, viewed as a $\D$-module. Let $\mathcal{O}=\C\{\mathbf{x}\}$ and let $f\in\mathcal{O}$. We focus on the module $N=\mathcal{O}[1/f]/\mathcal{O}$ which is quasi-isomorphic to the complex of algebraic local cohomology 
$\mathbb{R}^{\bullet+1}\Gamma_{[f=0]}(\mathcal{O})$. Although this module is not finitely generated over $\mathcal{O}$, it turns to be a finitely generated $\D$-module. We will assume that $f$ is quasi-homogeneous and has an isolated singularity at the origin. We will give two minimal presentations of $N$, one of which being classical, the other coming from our general result in Section 3.

Let $f\neq 0$ be any function in $\mathcal{O}$. The vector space $\mathcal{O}[1/f,s]f^s$, where $f^s$ is understood as a symbol, has a natural structure of a $\D[s]$-module. Let us recall that there exists a polynomial $b_f(s)\in\C[s]$ called \emph{Bernstein-Sato polynomial} such that there exists $P(s)\in \D[s]$ such that 
\begin{equation}\label{eq7}
b(s)f^s=P(s)f^{s+1}.
\end{equation}   
Let $-k'$ be the least integral root of $b(s)$. It is known that $k'\geq 1$. Using the functional equation (\ref{eq7}), one has 
$
\mathcal{O}[1/f]=\mathcal{D}(1/f^{k'}).
$ 
Let us assume from now on that $f$ has an isolated singularity at the origin. Let 
\[
S_{i,j}=f'_i\partial_j-f'_j\partial_i
\]
for any $0<i<j\leq n$. Then the symbols $(\sigma^F(S_{i,j}))$ generate the kernel of the map of $\mathcal{O}$-algebras
\[
\phi_f:\mathcal{O}[\mathbf{\xi}]\to\bigoplus (J(f))^dT^d
\]
defined by $\xi_i\mapsto f'_iT$, and the operators $(S_{i,j})$ generate $\mathrm{Ann}_{\D}f^s$, see \cite{narvaez02}. Let us identify $\mathrm{gr}^F(\D)$ with $\C\{x_1,\dots,x_n\}[\xi_1,\dots,\xi_n]$. It is also known that the relations between the symbols $(\sigma(S_{i,j}))$ are generated by
\begin{equation}\label{eq8}
f'_i\sigma(S_{j,k})-f'_j\sigma(S_{i,k})+f'_k\sigma(S_{i,j})=0
\end{equation}
and
\begin{equation}\label{eq9}
\xi_i\sigma(S_{j,k})-\xi_j\sigma(S_{i,k})+\xi_k\sigma(S_{i,j})=0
\end{equation}
for each triple $i<j<k$ (see \cite{moi10}, paragraph 3.1.1). These relations are lifted by the following relations in $\D$:
\begin{equation*}
f'_i S_{j,k}-f'_j S_{i,k}+f'_k S_{i,j}=0
\end{equation*}
and
\begin{equation*}
\partial_i S_{j,k}-\partial_j S_{i,k}+ \partial_k S_{i,j}=0.
\end{equation*}

We also assume that $f$ is quasi-homogeneous, i.e.\ there exist positive weights $w_1,\dots,w_n$ such that, denoting $\theta=\sum w_i x_i\partial_i$, we have $\theta(f)=f$. 

\subsection{Classical presentation of  
$\mathcal{O}[1/f]/\mathcal{O}$}

First let us consider the module $\mathcal{O}[1/f]$ endowed with the following good $F$-filtration:
\[
F_d\left(\mathcal{O}\left[\frac{1}{f}\right]\right)=F_d(\mathcal{D})\frac{1}{f^{k'}}.
\] 

\begin{proposition}\label{prop1}
The module $\mathcal{O}[1/f]$ admits the minimal filtered presentation 
$
\mathcal{L}_1\stackrel{\phi_1}{\to}
\D[0]\stackrel{\phi_0}{\to} \mathcal{O}[1/f]\to 0
$
where 
\begin{itemize}
\item $\phi_0(1)=1/f^{k'}$, 
\item $\mathcal{L}_1=\D e\oplus(\bigoplus_{i<j}\D e_{i,j})$ with 
$\mathrm{ord}(e)=1$ and $\mathrm{ord}(e_{i,j})=1$,
\item $\phi_1(e)=\theta+k'$,
\item $\phi_1(e_{i,j})=S_{i,j}$.
\end{itemize}
\end{proposition}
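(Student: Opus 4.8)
The plan is to establish first that the listed data define a complex and that $\phi_0$ is surjective with the claimed presentation, and then to verify minimality by computing symbols. First I would check surjectivity of $\phi_0$: by the functional equation (\ref{eq7}) and the standard consequence $\mathcal{O}[1/f]=\D(1/f^{k'})$ recalled above, the map $1\mapsto 1/f^{k'}$ is onto. Next I would compute the kernel of $\phi_0$. An operator $Q\in\D$ kills $1/f^{k'}$ iff $Q\cdot f^{-k'}=0$; writing $Q=Q(s)|_{s=-k'}$ and using that $\mathrm{Ann}_{\D[s]}f^s$ is generated by the $S_{i,j}$ together with $\theta-s$ (here the quasi-homogeneity enters: $\theta(f)=f$ gives $\theta f^s = sf^s$, so $\theta-s$ annihilates $f^s$), one gets that $\mathrm{Ann}_{\D}f^{-k'}$ is generated by the $S_{i,j}$ and $\theta+k'$. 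This is exactly the image of $\phi_1$, so the sequence is exact at $\D[0]$.

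Then I would verify that the filtration shifts are correct, i.e.\ that $\phi_1$ is $F$-adapted with the stated orders. Each $S_{i,j}=f'_i\partial_j-f'_j\partial_i$ has order $1$, and $\theta+k'=\sum w_ix_i\partial_i+k'$ has order $1$, so with $\mathrm{ord}(e)=\mathrm{ord}(e_{i,j})=1$ the map $\phi_1:\mathcal{L}_1\to\D[0]$ is $F$-adapted of degree $0$; and $\phi_0:\D[0]\to\mathcal{O}[1/f]$ is $F$-adapted for $F_d(\mathcal{O}[1/f])=F_d(\D)(1/f^{k'})$ essentially by definition. One should also check that this is a \emph{filtered} presentation, i.e.\ that $F_d(\ker\phi_0)=\phi_1(F_d(\mathcal{L}_1))$; this is the statement that $(\theta+k',(S_{i,j}))$ is an $F$-involutive base of $\mathrm{Ann}_{\D}f^{-k'}$. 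For that I would invoke Proposition \ref{prop5}: it suffices to exhibit generators of the relations among the symbols $\sigma(\theta+k')=\theta_\sigma:=\sum w_ix_i\xi_i$ and $\sigma(S_{i,j})$ in $\mathrm{gr}^F(\D)^r[\mathbf{1}]$ that lift to relations among the operators themselves. The relations among the $\sigma(S_{i,j})$ alone are the Koszul-type relations (\ref{eq8}), (\ref{eq9}), already known to lift. The new relations are those involving $\theta_\sigma$; since $f$ is quasi-homogeneous, $\sum_i w_i x_i f'_i = \deg(f)\cdot f$ (Euler), and one checks relations of the form $[\theta,S_{i,j}]=cS_{i,j}$ at the operator level (each $S_{i,j}$ is $\theta$-homogeneous because $f'_i$ and $\partial_i$ are), whose symbols give the remaining generating relations.

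Finally, minimality: I would pass to the homogenization $\mathbf{R}$ (equivalently to $\mathrm{gr}^F$) and check that every entry of the matrix of $\mathbf{R}\phi_1$ lies in $\mathfrak{m}^{(h)}$, i.e.\ that no entry is a nonzero constant. The entries of $\phi_1$ are, up to the grading, the coefficients expressing $\theta+k'$ and the $S_{i,j}$ in terms of the generator of $\D[0]$ — but these are the operators themselves, $\theta+k'$ and $S_{i,j}=f'_i\partial_j-f'_j\partial_i$, all of which have zero constant term since $f$ has an isolated singularity so $f'_i\in\mathfrak{m}\subset\mathcal{O}$ and $k'\geq 1$ contributes only via $h$ after homogenization — more carefully, $\theta+k'$ homogenizes to $\sum w_i x_i\partial_i + k' h$, which lies in $\mathfrak{m}^{(h)}$, and each $S_{i,j}$ homogenizes to $f'_i\partial_j - f'_j\partial_i\in\mathfrak{m}^{(h)}$. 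Hence $\mathbf{R}\phi_1$ has all entries in $\mathfrak{m}^{(h)}$, so the presentation is minimal. The main obstacle I anticipate is the involutivity/exactness-as-filtered-complex step: pinning down precisely the generating relations among $\{\theta_\sigma,\sigma(S_{i,j})\}$ and verifying each lifts — one must be careful that adjoining the Euler-type symbol $\theta_\sigma$ does not create relations beyond those coming from commutators with the $S_{i,j}$ and from the syzygies (\ref{eq8})--(\ref{eq9}); here the quasi-homogeneity and the isolated-singularity hypotheses (which control $J(f)$ and hence $\ker\phi_f$) are doing the real work.
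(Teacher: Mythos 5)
Your outline follows the paper's route step for step (kernel via $\mathrm{Ann}_{\D[s]}f^s$ and specialization at $s=-k'$, involutivity via Proposition \ref{prop5}, minimality by inspecting the homogenized matrix), but the one step you explicitly defer --- ``pinning down precisely the generating relations among $\{\theta_\sigma,\sigma(S_{i,j})\}$ and verifying each lifts'' --- is exactly the substantive content of the proof, and you leave it unproved. Your guess that the only new relations are the commutators $[\theta,S_{i,j}]=cS_{i,j}$ is correct, but asserting that these, together with (\ref{eq8}) and (\ref{eq9}), generate \emph{all} syzygies of $\{\sigma(\theta),\sigma(S_{i,j})\}$ requires an argument: a priori, adjoining $\sigma(\theta)$ could create genuinely new relations if $\sigma(\theta)$ were a zero divisor modulo $(\sigma(S_{i,j}))$.

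The paper closes this gap in two lines: $\sigma(\theta)=\sum w_ix_i\xi_i$ is a non-zero divisor on $\mathcal{O}[\xi]/(\sigma(S_{i,j}))$, because under the map $\phi_f:\mathcal{O}[\xi]\to\bigoplus(J(f))^dT^d$ one has $\phi_f(\sigma(\theta))=(\sum w_ix_if'_i)T=fT$ by the Euler relation, the target is a domain, and $\ker\phi_f=(\sigma(S_{i,j}))$ by the isolated-singularity hypothesis; so $P\sigma(\theta)\in(\sigma(S_{i,j}))$ forces $\phi_f(P)f=0$, hence $\phi_f(P)=0$, hence $P\in(\sigma(S_{i,j}))$. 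Given this regularity, every syzygy involving $\sigma(\theta)$ reduces, modulo the commutator relation (\ref{eq11}), to a syzygy of the $\sigma(S_{i,j})$ alone, so the generating relations are exactly (\ref{eq8}), (\ref{eq9}), (\ref{eq11}); and (\ref{eq11}) lifts via the explicit identity $[\theta+k',S_{i,j}]=(1-w_i-w_j)S_{i,j}$, i.e.\ $(\theta+k'-1+w_i+w_j)S_{i,j}-S_{i,j}(\theta+k')=0$. You need to supply this non-zero-divisor argument (or an equivalent one); without it the involutivity, and hence the filtered exactness of the presentation, is not established. The rest of your proposal --- surjectivity, identification of the kernel, adaptedness of the shifts, and the minimality check that $\sum w_ix_i\partial_i+k'h$ and the $S_{i,j}$ lie in $\mathfrak{m}^{(h)}$ --- matches the paper.
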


\begin{proof}
Since $\mathrm{Ann}_{\D}f^s$ is generated by $(S_{i,j})$, we have that $\mathrm{Ann}_{\D[s]}f^s$ is generated by $(S_{i,j})$ and $s-\theta$. Using \cite{cimpa1}, Remark 14, we deduce that $\mathrm{Ann}_{\D}(1/f^{k'})$ is generated by $(S_{i,j})$ and $\theta+k$. Let us show that this system of generators is $F$-involutive.

We claim that $\sigma(\theta)$ is a non-zero divisor on $\mathcal{O}[\mathbf{\xi}]/(\sigma(S_{i,j}))$. Indeed, assume that 
$P\sigma(\theta)\in (S_{i,j})$. Then 
$0=\phi_f(P\sigma(\theta))=\phi_f(P)f$
thus $\phi_f(P)=0$ and $P\in (\sigma(S_{i,j}))$ as claimed. Next, the relation
\begin{equation}\label{eq11}
\sigma(\theta)\sigma(S_{i,j})-\sigma(S_{i,j})\sigma(\theta)=0
\end{equation}
can be lifted since 
$
[\theta+k',S_{i,j}]=(1-w_i-w_j)S_{i,j}
$
which reads
\[
(\theta+k'-1+w_i+w_j)S_{i,j}-S_{i,j}(\theta+k')=0.
\]
The involutivity follows then from Proposition \ref{prop5}, and the minimality of the presentation is clear. 
\end{proof}

Now we turn to the module $N$, which we endow similarly with the following good $F$-filtration:
\[
F_d(N)=F_d(\mathcal{D})\left[\frac{1}{f^{k'}}\right].
\] 

\begin{proposition}\label{prop2}
The module $N=\mathcal{O}[1/f]/\mathcal{O}$ admits the minimal filtered presentation
$
\mathcal{L}_1
\stackrel{\phi_1}{\to}\D[0]\stackrel{\phi_0}{\to} N\to 0
$
where 
\begin{itemize}
\item $\phi_0(1)=[1/f^{k'}]$, 
\item $\mathcal{L}_1=\D e_0\oplus\D e_1\oplus(\bigoplus_{i<j}\D e_{i,j})$ with 
$\mathrm{ord}(e_0)=0$, $\mathrm{ord}(e_1)=1$, $\mathrm{ord}(e_{i,j})=1$, 
\item $\phi_1(e_0)=f^{k'}$,
\item $\phi_1(e_1)=\theta+k'$,
\item $\phi_1(e_{i,j})=S_{i,j}$.
\end{itemize}
\end{proposition}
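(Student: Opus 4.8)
The plan is to reduce Proposition \ref{prop2} to Proposition \ref{prop1} by means of the exact sequence $0\to\mathcal{O}\to\mathcal{O}[1/f]\to N\to 0$, which after choosing the filtrations $F_d(\mathcal{O})=F_{d-k'}(\D)\cdot 1$ (so that the inclusion $\mathcal{O}\hookrightarrow\mathcal{O}[1/f]$ is filtered and $f^{k'}\in F_{k'}(\D)$ maps $1$ to $1$) becomes a short exact sequence of $F$-filtered $\D$-modules whose associated graded sequence is still exact. Since $\mathcal{O}\simeq\D/\D\theta$ (here $\theta$ annihilates $1$, and $\mathrm{Ann}_{\D}1=\D\theta$ because $f$ has an isolated singularity, $\theta$ being an Euler vector field), and since a generator $e_0$ of order $0$ maps onto $1\in\mathcal{O}\subset\mathcal{O}[1/f]$ via $\phi_1(e_0)=f^{k'}$, the idea is to glue the presentation of $\mathcal{O}[1/f]$ from Proposition \ref{prop1} to a presentation of $\mathcal{O}$, using the horseshoe lemma in the filtered (equivalently, homogenized/graded) category.

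Concretely, I would argue as follows. First establish that $\mathrm{Ann}_{\D}([1/f^{k'}])$, i.e.\ the set of $P\in\D$ with $P(1/f^{k'})\in\mathcal{O}$, is generated by $f^{k'},\ \theta+k',\ (S_{i,j})_{i<j}$: the inclusion $\supset$ is clear ($f^{k'}(1/f^{k'})=1\in\mathcal{O}$, and the other two annihilate $1/f^{k'}$ already by Proposition \ref{prop1}), and for $\subset$, if $P(1/f^{k'})=g\in\mathcal{O}$ then, writing $g=\theta(h)$ for some $h\in\mathcal{O}$ using quasi-homogeneity (the Euler operator $\theta$ is invertible on the augmentation ideal $\mathfrak{m}\subset\mathcal{O}$ since $f$, hence $\mathcal{O}/\text{Jacobian ideal}$, concentrated at the origin; and the constant part of $g$ is absorbed into $f^{k'}$), one has $P(1/f^{k'})=\theta(h)=(\theta h f^{k'})(1/f^{k'})$ modulo $\mathrm{Ann}(1/f^{k'})$ up to the relation $\theta+k'$, so $P$ lies in the ideal generated by $f^{k'}$, $\theta+k'$ and $\mathrm{Ann}_{\D}(1/f^{k'})=(\theta+k',(S_{i,j}))$. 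This shows $\phi_1$ as defined has image exactly $\mathrm{Ker}\,\phi_0$, giving the presentation; the order assignments $\mathrm{ord}(e_0)=0$, $\mathrm{ord}(e_1)=\mathrm{ord}(e_{i,j})=1$ are exactly those making $\phi_1$ filtered.

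Next, minimality: by definition I must check that after homogenization the entries of the matrix of $\phi_1$ lie in $\mathfrak{m}^{(h)}$, equivalently that the entries of $\mathrm{gr}^F(\phi_1)$ lie in $\mathfrak{m}\subset\mathrm{gr}^F(\D)$. The entries are $\sigma^F(f^{k'})=\bar f^{k'}$ (in $\mathcal{O}\subset\mathrm{gr}^F(\D)$, vanishing at the origin since $k'\geq 1$ and $f(0)=0$ — $f$ has a singularity at $0$), $\sigma^F(\theta+k')=\sigma^F(\theta)=\sum w_i x_i\xi_i\in\mathfrak{m}$, and $\sigma^F(S_{i,j})=f'_i\xi_j-f'_j\xi_i$, all of whose terms involve a $\xi$ hence lie in $\mathfrak m$. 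So all entries lie in $\mathfrak m$ and the presentation is minimal. (Alternatively I can invoke the horseshoe construction in the minimal graded category from the minimal presentations of $\mathcal{O}$ and $\mathcal{O}[1/f]$, but the direct check is shorter.)

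The main obstacle I anticipate is the surjectivity half of the annihilator computation — precisely, controlling that every $P$ with $P(1/f^{k'})\in\mathcal{O}$ can be rewritten, modulo $f^{k'}\D_{\le}$ and the relation $\theta+k'$, into an element of $\mathrm{Ann}_{\D}(1/f^{k'})$ without raising the $F$-order. This uses in an essential way the quasi-homogeneity (invertibility of $\theta$ on the maximal ideal of $\mathcal{O}$, a consequence of the isolated-singularity and quasi-homogeneity hypotheses, cf.\ the Brieskorn–K.\ Saito circle of ideas) together with the already-established structure $\mathrm{Ann}_{\D}(1/f^{k'})=(\theta+k',(S_{i,j}))$ from Proposition \ref{prop1}. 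Once the generators are in hand, verifying that the listed generators of the relations among $f^{k'},\theta+k',(S_{i,j})$ lift (the two families \eqref{eq8}--\eqref{eq9} of relations among the $S_{i,j}$, plus the commutators $[\theta+k',S_{i,j}]$ and the relation expressing $(\theta+k')\cdot f^{k'}=f^{k'}\cdot(\theta+k'+$ something$)$ coming from $\theta(f^{k'})=k'f^{k'}$, i.e.\ $\theta f^{k'}=f^{k'}(\theta+k')$, hence $(\theta+k')f^{k'}-f^{k'}(\theta+2k')=0$) is then a routine involutivity check via Proposition \ref{prop5}, and minimality follows as above.
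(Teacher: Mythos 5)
Your skeleton is the same as the paper's (exactness reduced to Proposition \ref{prop1}, then an involutivity check via Proposition \ref{prop5}, then minimality), and the exactness step is essentially fine, though more convoluted than necessary: the paper simply observes that $P(1/f^{k'})=g\in\mathcal{O}$ means $(P-gf^{k'})(1/f^{k'})=0$, so $P\in \D f^{k'}+\mathrm{Ann}_{\D}(1/f^{k'})$; your detour through invertibility of $\theta$ on $\mathfrak{m}\subset\mathcal{O}$ adds nothing. (A side remark in your first paragraph, $\mathcal{O}\simeq\D/\D\theta$, is false --- $\mathrm{Ann}_{\D}1=\sum_i\D\partial_i$, which strictly contains $\D\theta$ --- but your concrete argument does not use it.)

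The genuine gap is in the involutivity step, which is the actual content of ``filtered presentation'' (exactness of $F_d(\mathcal{L}_1)\to F_d(\mathcal{L}_0)\to F_d(N)\to 0$ for every $d$). Proposition \ref{prop5} requires a \emph{generating} set of the relations among the symbols $f^{k'}$, $\sigma(\theta)$, $(\sigma(S_{i,j}))$, and your proposed list does not generate them. The new relations created by adjoining $f^{k'}$ are governed by the ideal quotient $\bigl((\sigma(\theta)),(\sigma(S_{i,j}))\bigr):f^{k'}$, which the paper identifies as $(\xi_1,\dots,\xi_n)$; accordingly one needs, for each $j$, the relation \eqref{eq10}, namely $-\xi_jf^{k'}+f^{k'-1}f'_j\sigma(\theta)-\sum_i f^{k'-1}w_ix_i\sigma(S_{j,i})=0$, lifted to $-\partial_jf^{k'}+f^{k'-1}f'_j(\theta+k')-\sum_i f^{k'-1}w_ix_iS_{j,i}=0$. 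Your substitute, the symbol of $\theta f^{k'}-f^{k'}(\theta+k')=0$, i.e.\ $\sigma(\theta)\cdot e_0-f^{k'}\cdot e_1=0$, is exactly the combination $\sum_j w_jx_j\cdot\eqref{eq10}_j$ and does not generate the $\eqref{eq10}_j$ individually, since $\xi_j\notin(\sigma(\theta))$. So with your list Proposition \ref{prop5} cannot be invoked, and the filtered exactness is not established. The same omission undercuts your minimality claim: checking that the entries of $\mathrm{gr}^F\phi_1$ lie in $\mathfrak{m}$ only shows that $\phi_0$ is a minimal surjection; to know that $e_0,e_1,(e_{i,j})$ minimally generate $\mathrm{Ker}\,\phi_0$ one must inspect the syzygies, which is precisely why the paper computes \eqref{eq8}, \eqref{eq9}, \eqref{eq11} and \eqref{eq10} and notes that none has a unit coefficient. (The horseshoe/mapping-cone alternative you mention does not rescue this either, as cones of minimal resolutions need not be minimal.)
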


\begin{proof}
First, let us show that $\mathrm{Ker}\phi_0=\mathrm{Im}\phi_1$. For $P\in\D$, $\phi_0(P)=0$ if and only if 
$P.(1/f^{k'})=g\in\mathcal{O}$. 
That reads $(P-gf^{k'}).(1/f^{k'})=0$ and the result follows from Proposition \ref{prop1}.

Let us show that $(f^{k'},\theta+k',(S_{i,j}))$ form an $F$-involutive basis.
We claim that the relations between the symbols $\sigma(f^{k'})=f^{k'}$, $\sigma(\theta+k')=\sigma(\theta)$, $(\sigma(S_{i,j}))$ are generated by the relations (\ref{eq8}), (\ref{eq9}), (\ref{eq11}) and the following:
\begin{equation}\label{eq10}
-\xi_jf^{k'}+f^{k'-1}f'_j\sigma(\theta)-\sum_i f^{k'-1}w_ix_i\sigma(S_{j,i})=0
\end{equation}
for any $j=1,\dots,n$. 
Let $P\in\mathrm{gr}^F(\D)$ homogeneous such that 
\[
Pf^{k'}\in ((\sigma(\theta)),(\sigma(S_{i,j}))).
\]
 The element $P$ must have degree at least $1$, i.e. $P\in (\xi_i)$ and conversely each $P\in(\xi_i)$ is allowed, using $(\ref{eq10})$. The relations between the elements $(\theta+k,(S_{i,j}))$
are generated by $(\ref{eq8}), (\ref{eq9}), (\ref{eq11})$, which proves our claim. We then conclude to prove that $(f^k,\theta+k,(S_{i,j}))$ form an $F$-involutive basis by saying that the relations given above can be lifted: we already mentioned it for $(\ref{eq8}), (\ref{eq9}), (\ref{eq11})$; and (\ref{eq10}) is lifted as follows:
\[
-\partial_jf^{k'}+f^{k'-1}f'_j(\theta+k')-
\sum_i f^{k'-1}w_ix_iS_{j,i}=0.
\]
Finally, these computations prove that the presentation is minimal.
\end{proof}

\subsection{Another presentation of $\mathcal{O}[1/f]/\mathcal{O}$}

We still assume that $f$ is quasi-homogeneous with an isolated singularity at the origin, and we will derive from Proposition \ref{thm1} and our results in \cite{moi10} another minimal presentation of the module 
$N=\mathcal{O}[1/f]/\mathcal{O}$. Let $\mathcal{O}_{x,t}=\C\{\mathbf{x},t\}$. We will apply Proposition \ref{thm1} to the $\D_{x,t}$-module
\[
M=\frac{\mathcal{O}_{x,t}[\frac{1}{f-t}]}{\mathcal{O}_{x,t}}.
\] 
which is quasi-isomorphic to the algebraic local cohomology 
$\mathbb{R}^{\bullet+1}\Gamma_{[f-t=0]}(\mathcal{O}_{x,t})$. There is a $\D_{x,t}$-module structure on $\mathcal{O}[1/f,s]f^s$ such that $s$ acts as $-\partial_t t$ and 
$
\D_{x,t}f^s\simeq M
$
by the mapping $f^s\mapsto [1/(f-t)]$. We endow $M$ with the good bifiltration 
\[
F_{d,k}(M)=F_{d,k}(\D_{x,t})[1/(f-t)].
\]

\begin{lemma}
$M$ satisfies the assumptions in Proposition \ref{prop10}:
\begin{enumerate}
\item $t:\mathrm{gr}^F(M)\to\mathrm{gr}^F(M)$ is injective,
\item For any $d,k$, $F_{d,k}(M)=F_d(M)\cap V_k(M)$.
\end{enumerate}
\end{lemma}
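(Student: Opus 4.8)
The plan is to verify the two conditions separately, exploiting the explicit description of $M$ as $\D_{x,t}f^s$ with $s$ acting as $-\partial_t t$ and the fact that $f$ is quasi-homogeneous with an isolated singularity. For condition (1), injectivity of $t$ on $\mathrm{gr}^F(M)$, I would first identify $\mathrm{gr}^F(M)$ concretely. Since $F_{d,k}(M)=F_{d,k}(\D_{x,t})\cdot[1/(f-t)]$, the $F$-filtration on $M$ is the good filtration induced by the single generator $[1/(f-t)]$ placed in degree $0$, so $\mathrm{gr}^F(M)\simeq \mathrm{gr}^F(\D_{x,t})/\mathrm{gr}^F(\mathrm{Ann}_{\D_{x,t}}[1/(f-t)])$ provided the generators of the annihilator form an $F$-involutive basis. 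Following the pattern of Propositions \ref{prop1} and \ref{prop2}, the annihilator $\mathrm{Ann}_{\D_{x,t}}f^s$ is generated by the operators $S_{i,j}=f'_i\partial_j-f'_j\partial_i$ (which involve only the $x$-variables) together with $\theta+\partial_t t$ coming from the quasi-homogeneity relation $\theta(f)=f$ translated via $s=-\partial_t t$; one then checks this is an $F$-involutive basis by lifting the known relations (\ref{eq8}), (\ref{eq9}) and the bracket relation, exactly as in the proof of Proposition \ref{prop1}. Granting this, $\mathrm{gr}^F(M)\simeq \mathrm{gr}^F(\D_{x,t})/(\sigma(S_{i,j}),\sigma(\theta)+\tau_t t)$ where $\tau_t=\sigma(\partial_t)$, and injectivity of multiplication by $t$ amounts to showing $t$ is a non-zero-divisor on this quotient. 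This should follow because $t$ is a non-zero-divisor on $\mathrm{gr}^F(\D_{x,t})=\mathcal{O}_{x,t}[\xi,\tau_t]$ and the defining ideal is generated by elements whose leading behaviour in $t$ is controlled; concretely, one can use the Koszul-type argument that the sequence $(t,\sigma(S_{i,j}),\sigma(\theta)+\tau_t t)$ behaves like a regular sequence, or reduce to the isolated-singularity statement that $(\sigma(S_{i,j}))$ already cut out a Cohen--Macaulay quotient of $\mathcal{O}_x[\xi]$ of the expected dimension.

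For condition (2), that $F_{d,k}(M)=F_d(M)\cap V_k(M)$ for all $d,k$, by Lemma \ref{lemma3} this is equivalent to the injectivity of multiplication by $h$ on $\mathrm{gr}^V(\mathbf{R}M)$. Here $\mathbf{R}M$ is the homogenization of $M$ with respect to the bifiltration, which by the same involutive-basis reasoning is $(\D_{x,t}^{(h)})$ modulo the homogenizations of $S_{i,j}$ and $\theta+\partial_t t$; passing to $\mathrm{gr}^V$ (giving weight $-1$ to $t$, weight $1$ to $\partial_t$) one gets an explicit bigraded quotient and must check $h$ acts without kernel. The key point is that the generators $S_{i,j}$ lie in $V_0$ and their homogenizations and $V$-symbols are ``nice'' (the $S_{i,j}$ do not involve $t$ or $\partial_t$ at all, so they are unaffected by $\mathrm{gr}^V$), while $\theta+\partial_t t$ has $V$-symbol involving $\theta$ alone if one is careful about the $t\partial_t$ term; this is precisely the kind of compatibility I established in \cite{moi10}. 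I would cite Lemma \ref{lemma3} to reduce to the $h$-torsion-freeness statement and then invoke the relevant computation from \cite{moi10} (the isolated-singularity case was treated there), rather than redoing it.

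The main obstacle I anticipate is establishing that the proposed generators of $\mathrm{Ann}_{\D_{x,t}}f^s$ — namely $(S_{i,j})$ and $\theta+\partial_t t$ — form a genuine $(F,V)$-bi-involutive basis, because both conditions (1) and (2) hinge on having a clean description of $\mathrm{gr}^F(M)$ and $\mathrm{gr}^V(\mathbf{R}M)$, and that description is only valid once involutivity (in the appropriate sense, i.e.\ with respect to $F$ for (1) and with respect to the bifiltration for (2)) is known. The $S_{i,j}$ are harmless since they live purely in the $x$-side and sit in $V_0$, so the real work is controlling the interaction of $\theta+\partial_t t$ with both filtrations simultaneously and verifying that all the syzygies lift compatibly. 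Once that structural input is in place, the two injectivity statements reduce to the commutative-algebra facts about the Jacobian ideal $J(f)$ of an isolated quasi-homogeneous singularity (principally that $(\sigma(S_{i,j}))$ define a Cohen--Macaulay quotient and that the extra generator contributes a non-zero-divisor), which are available from \cite{narvaez02} and \cite{moi10}. I would therefore structure the proof as: (a) produce the bi-involutive basis; (b) deduce the explicit forms of $\mathrm{gr}^F(M)$ and $\mathrm{gr}^V(\mathbf{R}M)$; (c) check $t$-regularity on the former and $h$-regularity on the latter, citing \cite{moi10} and Lemma \ref{lemma3} where possible.
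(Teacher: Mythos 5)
There is a genuine gap in your treatment of condition (1): it rests on the claim that $\mathrm{Ann}_{\D_{x,t}}[1/(f-t)]$ is generated by the $S_{i,j}$ together with $\theta+\partial_t t$, and hence that $\mathrm{gr}^F(M)\simeq\mathrm{gr}^F(\D_{x,t})/(\sigma(S_{i,j}),\sigma(\theta)+t\tau)$. That identification is wrong. The operators you list do lie in the annihilator, but the annihilator also contains $t-f$ and the operators $\partial_i+f'_i\partial_t$ (indeed $M$ is the direct image of $\mathcal{O}$ under the graph embedding $x\mapsto(x,f(x))$, and these are its standard generators); the element $t-f$ is not in the left ideal generated by your operators (for $n=1$ this is immediate, since a nonzero multiple $Q(\theta+\partial_t t)$ has $F$-order $\mathrm{ord}(Q)+1\geq 1$, while $\mathrm{ord}(t-f)=0$). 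More decisively, the symbol ideal you propose is generated by elements of positive degree in $(\xi,\tau)$ and therefore contains no nonzero element of degree $0$, in particular not $t-f$; yet $t-f$ annihilates the class of the generator in $\mathrm{gr}^F_0(M)$. So your candidate quotient only surjects onto $\mathrm{gr}^F(M)$ and is strictly larger (its support has dimension $\geq n+2$, whereas the characteristic variety of $M$ is the conormal to $\{t=f\}$, of dimension $n+1$). The paper's proof uses the correct description: by \cite{moi10}, Lemma 2.1, $\mathrm{gr}^F(M)\simeq\mathrm{gr}^F(\D_{x,t})/(t-f,(\xi_i+f'_i\tau)_i)$, and condition (1) is then the elementary fact that $(t,\,t-f,\,(\xi_i+f'_i\tau)_i)$ is a regular sequence (a complete-intersection dimension count; no Cohen--Macaulayness of the Jacobian ring, and in fact neither quasi-homogeneity nor the isolated singularity, is needed for this step). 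Your anticipated ``main obstacle'' of building a bi-involutive basis from $S_{i,j}$ and $\theta+\partial_t t$ is therefore the wrong obstacle: the relevant involutive basis is $(t-f,(\partial_i+f'_i\partial_t)_i)$, and it is already available from \cite{moi10}.

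For condition (2), your reduction via Lemma \ref{lemma3} to the injectivity of $h$ on $\mathrm{gr}^V(\mathbf{R}M)$ matches the paper, which simply cites \cite{moi10} (Propositions 3.2 and 2.6 and Lemma 1.1) for the verification. But as written you would be checking $h$-regularity on the wrong bigraded quotient, built from the same incorrect generating set, so this part inherits the defect above; it becomes correct only once the annihilator (and hence $\mathbf{R}M$ and $\mathrm{gr}^V(\mathbf{R}M)$) is described via $t-f$ and the $\partial_i+f'_i\partial_t$.
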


\begin{proof}
By e.g.\ \cite{moi10}, Lemma 2.1., we have 
\[
\mathrm{gr}^F(M)\simeq\frac{\mathrm{gr}^F(\D_{x,t})}{(t-f,(\xi_i+f'_i\tau)_i)},
\]
then 1.\ comes from the fact that $(t,t-f,(\xi_i+f'_i\tau)_i))$ form a regular sequence. Finally, \cite{moi10}, Proposition 3.2, Proposition 2.6 and Lemma 1.1 give 2.
\end{proof}

\begin{lemma}\label{lemma2}
There is an isomorphism $M/tM\simeq N$
defined as follows: if $g(x,t)\in\mathcal{O}_{x,t}$, then $[g(x,t)/(f-t)^r]$ is mapped to $[g(x,0)/f^r]$.
\end{lemma}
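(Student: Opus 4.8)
I would deduce the isomorphism from the two tautological short exact sequences of $\D_{x,t}$-modules
\[
0\to\mathcal{O}_{x,t}\to\mathcal{O}_{x,t}\Big[\tfrac{1}{f-t}\Big]\to M\to 0,
\qquad
0\to\mathcal{O}\to\mathcal{O}\Big[\tfrac1f\Big]\to N\to 0,
\]
together with the remark that multiplication by $t$ is injective on $\mathcal{O}_{x,t}$ and on the domain $\mathcal{O}_{x,t}[1/(f-t)]$ (both sit inside the field of fractions of $\mathcal{O}_{x,t}$).

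The plan is then to apply the snake lemma to the ladder given by multiplication by $t$ over the first short exact sequence. As the two leftmost vertical maps are injective, this produces an exact sequence
\[
0\to\ker\big(t\colon M\to M\big)\to\mathcal{O}_{x,t}/t\,\mathcal{O}_{x,t}\to\mathcal{O}_{x,t}[1/(f-t)]/t\,\mathcal{O}_{x,t}[1/(f-t)]\to M/tM\to 0.
\]
Localization commutes with passage to the quotient by $t$, so $\mathcal{O}_{x,t}/t\,\mathcal{O}_{x,t}\simeq\mathcal{O}$ and, since $f-t$ reduces to $f$ modulo $t$, $\mathcal{O}_{x,t}[1/(f-t)]/t\simeq\mathcal{O}[1/f]$; under these identifications the middle arrow is the canonical inclusion $\mathcal{O}\hookrightarrow\mathcal{O}[1/f]$, which is injective. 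Hence $\ker(t\colon M\to M)=0$ (recovering, incidentally, the injectivity of $t$ on $M$) and the snake sequence collapses to an isomorphism
\[
M/tM\;\xrightarrow{\ \sim\ }\;\mathrm{coker}\big(\mathcal{O}\hookrightarrow\mathcal{O}[1/f]\big)\;=\;N.
\]

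It then remains to check that this isomorphism is the stated one. Chasing through the construction, the composite $\mathcal{O}_{x,t}[1/(f-t)]\twoheadrightarrow M\twoheadrightarrow M/tM\xrightarrow{\sim}N$ is simply reduction modulo $t$ followed by projection to $\mathcal{O}[1/f]/\mathcal{O}$, so it sends $g(x,t)/(f-t)^r$ to $[\,g(x,0)/f^r\,]$, as required. Every arrow in the diagram — the inclusions, multiplication by $t$, the reductions modulo $t$ — is $\mathcal{O}$-linear and commutes with $\partial_{x_1},\dots,\partial_{x_n}$ (because $t$ commutes with $\partial_{x_i}$ in $\D_{x,t}$), so the induced map on $H_0 i^{*}M=M/tM$ is $\D$-linear, which is the structure the restriction functor assigns to it. I do not expect a serious obstacle here: the only points requiring care are the identification $\mathcal{O}_{x,t}[1/(f-t)]/t\simeq\mathcal{O}[1/f]$ (that localization and the quotient by $t$ commute and carry $f-t$ to $f$) and the bookkeeping showing that the connecting isomorphism produced by the snake lemma is precisely the ``set $t=0$'' map and not some twist of it.
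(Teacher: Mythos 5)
Your argument is correct. The paper itself offers no real proof here --- it declares the lemma ``straightforward'' and only records the sample computation that $\partial_t^j\cdot[1/(f-t)]$ maps to $(j!)[1/f^{j+1}]$, leaving the reader to do the direct verification (well-definedness, surjectivity, and the identification of the kernel with $tM$) by hand. Your snake-lemma derivation packages exactly that verification into a formal diagram chase: the only substantive inputs are that $t$ is a nonzerodivisor on $\mathcal{O}_{x,t}$ and on $\mathcal{O}_{x,t}[1/(f-t)]$, that localization commutes with the quotient by $t$ (so $\mathcal{O}_{x,t}[1/(f-t)]/t\simeq\mathcal{O}[1/f]$ with $f-t$ reducing to $f$), and that $\mathcal{O}\hookrightarrow\mathcal{O}[1/f]$ is injective. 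All of these are immediate, and the bookkeeping showing that the resulting isomorphism is ``reduce modulo $t$'' is exactly as you describe; the $\D$-linearity remark is also the right one, since $tM$ is only a $\D_x$-submodule and all arrows commute with $\mathcal{O}$ and the $\partial_{x_i}$. What your route buys in addition is that the vanishing of $\ker(t\colon M\to M)$ falls out of the same exact sequence, i.e.\ you recover for free the injectivity of $t$ on $M$ that the paper invokes separately just before stating the lemma. No gaps.
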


The proof is straightforward. Let us note, for example, that the image of $\partial_t^i.[1/(f-t)]$ is $(j!)[1/f^{j+1}]$.

The $b$-function of $M$ as defined by Oaku-Takayama, denoted by $b_M(X)$, and the Bernstein-Sato polynomial $b_f(s)$ of $f$ are very close. By definition, $b_f(s)$ satisfies
$$
b_f(s)\D[s]f^s\subset\D[s]f^{s+1}.
$$ 
On the other hand, we have $V_0(M)=\D[s]f^s$ and 
$V_{-1}(M)=\D[s]f^{s+1},$ thus $b_M(X)$ is defined by 
$$
b_M(t\partial_t)\D[s]f^s\subset\D[s]f^{s+1}.
$$
Identifying  $s$ and $-\partial_tt$, we get $b_M(-(s+1))=b_f(s)$, and finally $k'=k_1+1.$

From now on we endow the module $N$ with the following good $F$-filtration, which takes into account the order of the pole $f$:
$$
F_d(N)=\sum_{1\leq j\leq k'}F_{d-(j-1)}(\D)
\left[\frac{1}{f^{j}}\right],
$$ 
i.e.\ $\mathrm{ord}^F(1/f^j)=j-1$ for any $1\leq j\leq k'$.
In particular, the identity $1/f^j=f.1/f^{j+1}$ for $1\leq j\leq k'-1$ is not adapted to that filtration.

\begin{proposition}\label{prop4}
The module $N=\mathcal{O}[1/f]/\mathcal{O}$ admits the minimal filtered presentation
$
\mathcal{L}_1
\stackrel{\varphi_1}{\to}
\mathcal{L}_0
 \stackrel{\varphi_0}{\to} N\to 0
$
where 
\begin{description}
\item $\mathcal{L}_0=\bigoplus_{k=0}^{k_1}\D\partial_t^k$
with $\mathrm{ord}^F(\partial_t^k)=k$,
\item $\varphi_0(\partial_t^k)=(k!)[1/f^{k+1}]$,
\item $\mathcal{L}_1=
(\bigoplus_{k=0}^{k_1}\D\partial_t^kX_1)
\oplus(\bigoplus_{i=1}^n\bigoplus_{k=0}^{k_1-1}\D\partial_t^ke_i)
\oplus \D X_2$ 
\item with $\mathrm{ord}^F(\partial_t^kX_1)=k+1$, 
$\mathrm{ord}^F(\partial_t^ke_i)=k+1$,
$\mathrm{ord}^F(X_2)=0$,
\item $\varphi_1(\partial_t^kX_1)=(k+1+\theta)\partial_t^k$,
\item $\varphi_1(\partial_t^ke_i)=
-(f'_i\partial_t^{k+1}+\partial_i\partial_t^k)$,
\item $\varphi_1(X_2)=f$. 
\end{description}
\end{proposition}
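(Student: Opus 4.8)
The plan is to apply Proposition \ref{thm1} to the specific module $M=\mathcal{O}_{x,t}[1/(f-t)]/\mathcal{O}_{x,t}$, for which the hypotheses have just been verified, and then to translate the abstract resolution $V_{k_1}(\mathcal{L}_\bullet)/(tV_{k_1+1}(\mathcal{L}_\bullet))$ into the explicit one stated here. First I would fix a bifiltered free resolution $\mathcal{L}_1\to\mathcal{L}_0\to M\to 0$ of $M$. The natural choice is the one coming from the presentation of $M$ by the operators that cut out $\D_{x,t}f^s$: since $s$ acts as $-\partial_t t$, the module $M$ is $\D_{x,t}/\mathrm{Ann}_{\D_{x,t}}f^s$ up to the $\mathcal{O}_{x,t}$-part, and $\mathrm{Ann}$ is generated by the $S_{i,j}$ together with the operators $\xi_i+f_i'\tau$ type relations (equivalently $f_i'\partial_t+\partial_i$) and by $\theta+1+ \cdots$; the quotient by $\mathcal{O}_{x,t}$ introduces the generator $f-t$. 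So I would write down $\mathcal{L}_0=\D_{x,t}[0]$ with image generator $[1/(f-t)]$, and $\mathcal{L}_1$ generated by $e_0\mapsto f-t$, $e_1\mapsto \theta+1$ (the Euler operator adjusted for the $t$-weight), and $e_i\mapsto -(f_i'\partial_t+\partial_i)$ for $i=1,\dots,n$, mirroring Proposition \ref{prop2} but in the $(x,t)$-setting. The relations among these are lifted exactly as in Section 4 (the Koszul-type relations \eqref{eq8}, \eqref{eq9}, \eqref{eq11}, \eqref{eq10} with $f^{k'}$ replaced by $f-t$), which makes the resolution bifiltered and, with the appropriate shifts, minimal.

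Next I would compute $V_{k_1}(\mathcal{L}_i)/(tV_{k_1+1}(\mathcal{L}_i))$. Here the key point is that $k'=k_1+1$, established just above, so $V_{k_1}=V_{k'-1}$; on a free module $\D_{x,t}[\mathbf{m}]$ the space $V_{k_1}(\D_{x,t})$ is spanned over $\D_x$ by the monomials $t^a\partial_t^b$ with $b-a\le k_1$, and modding out by $t$ kills the $a>0$ part, leaving $\bigoplus_{k=0}^{k_1}\D_x\,\partial_t^k$ as claimed for $\mathcal{L}_0$. Doing the same bookkeeping on $\mathcal{L}_1$: the generator $e_0\mapsto f-t$ has $V$-order $0$, so after truncation and killing $t$ it contributes $\bigoplus_{k=0}^{k_1}\D\partial_t^k X_1$? — no: $e_0$ contributes a single $X_2\mapsto f$ (the $-t$ disappears mod $t$), while $e_1$ (Euler, $V$-order $0$) contributes $\bigoplus_{k=0}^{k_1}\D\partial_t^k X_1$ with $\varphi_1(\partial_t^k X_1)=(\theta+k+1)\partial_t^k$ after commuting $\partial_t^k$ past $\theta$, and each $e_i$ ($V$-order $+1$, since $f_i'\partial_t$ raises $V$-degree) contributes $\bigoplus_{k=0}^{k_1-1}\D\partial_t^k e_i$ because $\partial_t^{k}\cdot e_i$ reaches $V$-degree $k+1\le k_1+1$, forcing $k\le k_1$ — but after killing $t$ one of these becomes dependent, giving the range $0\le k\le k_1-1$. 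I would check these ranges carefully against the condition $V_{k_1+1}$ in the denominator. The $F$-orders quoted ($\mathrm{ord}^F(\partial_t^k)=k$, etc.) are read off from the shifts $[\mathbf{n}]$ of the resolution combined with the filtration $F_d(\mathcal{L}_i)\cap V_{k_1}$, and the shift by $k$ is precisely the discrepancy $F_{d,k_1}$ versus $F_d$ that makes $1/f^{j}$ have $F$-order $j-1$ under the isomorphism $M/tM\simeq N$ of Lemma \ref{lemma2}.

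Then I would invoke Proposition \ref{thm1}: because $M$ satisfies the hypotheses of Proposition \ref{prop10}, the complex \eqref{eq1} is an $F$-filtered free resolution of $M/tM$, and by Lemma \ref{lemma2} $M/tM\simeq N$ with $\partial_t^k[1/(f-t)]\mapsto (k!)[1/f^{k+1}]$, which is exactly $\varphi_0(\partial_t^k)$. Minimality requires a separate argument, since (as the Remark warns) minimality of the bifiltered resolution of $M$ need not pass to \eqref{eq1}; I would verify it directly by homogenizing with respect to $F$ and checking that every entry of the matrix of $\mathbf{R}\varphi_1$ lies in $\mathfrak{m}^{(h)}$ — i.e.\ that no entry is a unit. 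The entries are $\theta+k+1$, $f_i'\partial_t+\partial_i$, and $f$; since $f$ vanishes at the origin, $f_i'$ vanishes at the origin (isolated singularity, so $f_i'\in\mathfrak{m}$), and $\theta\in\mathfrak{m}\cdot\mathrm{gr}$ has no constant term, all entries indeed lie in $\mathfrak{m}^{(h)}$ after homogenization — the constants $k+1$ are absorbed because the generator $\partial_t^k X_1$ carries the shift that homogenizes $\theta+k+1$ to $h$-homogeneous form with no unit entry. I expect the main obstacle to be the precise bookkeeping of the $V$-truncation on $\mathcal{L}_1$ — tracking which generators survive, with which $\partial_t$-powers, and with which $F$-shifts, after simultaneously truncating at $V_{k_1}$ and quotienting by $tV_{k_1+1}$ — together with matching the resulting differential to the stated formulas via the commutation relations $[\theta,\partial_t]=\partial_t$ and $[\theta, f_i'\partial_t+\partial_i]=\cdots$ coming from quasi-homogeneity.
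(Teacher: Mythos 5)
Your overall strategy is the paper's: truncate a bifiltered free resolution of $M=\mathcal{O}_{x,t}[1/(f-t)]/\mathcal{O}_{x,t}$ via Proposition \ref{thm1} and identify $M/tM\simeq N$ through Lemma \ref{lemma2}. But there are two concrete gaps. First, the resolution of $M$ you start from is not a bifiltered free resolution: you omit the syzygies $e_i\wedge e_j\mapsto -S_{i,j}$. Unfiltered, the $S_{i,j}$ are redundant (they lie in the ideal generated by the $f'_i\partial_t+\partial_i$), but that expression costs $V$-order $1$ while $S_{i,j}$ has $V$-order $0$, so exactness at the level of $F_{d,k}$ fails without them; Proposition \ref{thm1} is only proved for genuinely bifiltered resolutions, so it cannot be applied to your complex. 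The paper instead takes the bifiltered resolution from \cite{moi10} up to homological degree $2$, whose first syzygy module contains a $\D_{x,t}\otimes\Lambda^2$ block and whose second syzygy module $\mathcal{L}_2$ is essential for the next point.

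Second, and more seriously, your bookkeeping of the truncation is asserted rather than derived, and as stated it is wrong: the generator $X_2$ of $\mathcal{L}_1$ has $V$-shift $0$, so $V_{k_1}(\D_{x,t}X_2)/tV_{k_1+1}(\D_{x,t}X_2)$ contributes $k_1+1$ copies $\D\partial_t^kX_2$, not one, and likewise each $e_i\wedge e_j$ contributes $k_1+1$ copies. The collapse to a single $X_2$ and the disappearance of the $\Lambda^2$ block are not consequences of ``killing $t$''; they come from a minimalization step (as in \cite{GO04}, paragraph 4.4) in which one cancels the unit entries of $\overline{\mathbf{R}\psi_2}$ against a free submodule $\mathcal{G}\subset\mathcal{F}_2$ --- e.g.\ $\overline{\mathbf{R}\psi_2}(\partial_t^{k}.1)=\partial_t^{k}\otimes X_1+\partial_t^{k+1}\otimes X_2+\sum_jw_jx_j\partial_t^{k}\otimes e_j$ eliminates $\partial_t^{k+1}\otimes X_2$ for $k\geq 0$. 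This is the bulk of the paper's proof, and it is also what establishes exactness of the stated presentation at $\mathcal{L}_0$ (the quotient $\mathcal{F}_1/\overline{\mathbf{R}\psi_2}(\mathcal{G})$ stays exact because the first two rows and the columns of the relevant diagram are exact). Your final check that the matrix of $\varphi_1$ has no unit entries is correct but only yields minimality of a complex whose exactness you have not established; this is precisely the point the Remark after Proposition \ref{thm1} warns about, since minimality of the bifiltered resolution of $M$ does not transfer to the truncated complex.
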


\begin{proof}
Let $S_1=\C X_1\oplus \C X_2$ and $\Lambda^j=\Lambda^j\C^n$ with basis $e_{i_1}\wedge\cdots\wedge e_{i_j}$ with $1\leq i_1<\cdots<i_j\leq n$. In the sequel, the tensor products are understood over $\C$.  Let $\delta_1$ (resp.\ $\delta_2$) denote the Koszul map associated with the sequence $f'_1,\dots,f_n$ (resp.\ $-\partial_1,\dots,-\partial_n$). For $i=1,2$, $\delta_i$ is a map
$\D\otimes\Lambda^{\bullet}\to\D\otimes\Lambda^{\bullet-1}$ and can be extended to a map $\D_{x,t}\otimes\Lambda^{\bullet}\to\D_{x,t}\otimes\Lambda^{\bullet-1}$ or to a map $\D^{(h)}\partial_t^k\otimes\Lambda^{\bullet}\to
\D^{(h)}\partial_t^k\otimes\Lambda^{\bullet-1}$ as well.

Our starting point is the beginning of the bifiltered free resolution of $M$, lifted from the bigraded resolution  computed in \cite{moi10}, proof of Theorem 3.2:
\[
\mathcal{L}_2\stackrel{\psi_2}{\to}
\mathcal{L}_1\stackrel{\psi_1}{\to}
\D_{x,t}[0][0]\stackrel{\psi_0}{\to} M\to 0
\]
where
\begin{description}
\item $\psi_0(1)=[1/(f-t)]$;
\item $\mathcal{L}_1=(\D_{x,t}\otimes S_1)\oplus 
(\D_{x,t}\otimes \Lambda^1)\oplus(\D_{x,t}\otimes \Lambda^2)$
\item with $\mathrm{ord}^{F,V}(X_1)=(1,0)$;
$\mathrm{ord}^{F,V}(X_2)=(0,0)$;
$\mathrm{ord}^{F,V}(e_i)=(1,1)$;\\
$\mathrm{ord}^{F,V}(e_i\wedge e_j)=(1,0)$;
\item $\psi_1(X_1)=\partial_t t+\theta$; $\psi_1(X_2)=f-t$;
$\psi_1(e_i)=-f'_i\partial_t -\partial_i$;
$\psi_1(e_i\wedge e_j)=-S_{i,j}$;
\item  $\mathcal{L}_2=\D_{x,t}.1\oplus (\D_{x,t}\otimes \Lambda^1)\oplus 
(\D_{x,t}\otimes \Lambda^2\otimes S_1)\oplus(\D_{x,t}\otimes \Lambda^3\otimes S_1)$
\item with $\mathrm{ord}^{F,V}(1)=(1,1)$;
$\mathrm{ord}^{F,V}(e_i)=(1,0)$;
$\mathrm{ord}^{F,V}(e_i\wedge e_j\otimes X_1)=(1,1)$;
$\mathrm{ord}^{F,V}(e_i\wedge e_j\otimes X_2)=(2,1)$;
$\mathrm{ord}^{F,V}(e_i\wedge e_j\wedge e_k\otimes X_1)=(1,0)$;
$\mathrm{ord}^{F,V}(e_i\wedge e_j\wedge e_k\otimes X_2)=(2,0)$;
\item $\psi_2(1)=X_1+\partial_t X_2+\sum w_jx_je_j$;
$\psi_2(e_i)=f'_iX_1-\partial_iX_2+te_i+\sum_j w_jx_je_i\wedge e_j$;
\item $\psi_2(e_i\wedge e_j\otimes X_1)=\delta_1(e_i\wedge e_j)+e_i\wedge e_j$;
$\psi_2(e_i\wedge e_j\otimes X_2)=\delta_2(e_i\wedge e_j)-\partial_te_i\wedge e_j$;
\item for $I=(i,j,k)$, $\psi_2(e_I\otimes X_l)=\delta_l(e_I)$.
\end{description}

By homogenizing with respect to $F$, this gives the beginning of a minimal $V$-adapted free resolution  
\[
\mathbf{R}\mathcal{L}_2\stackrel{\mathbf{R}\psi_2}{\to}
\mathbf{R}\mathcal{L}_1\stackrel{\mathbf{R}\psi_1}{\to}
\D_{x,t}^{(h)}[0][0]\stackrel{\mathbf{R}\psi_0}{\to} \mathbf{R}M\to 0
\]
with the maps $\mathbf{R}\psi_i$ for $i=1,2$ having the same expressions as the maps $\psi_i$.

Then we apply Proposition \ref{thm1}, we obtain the following exact sequence:
\[
\frac{V_{k_1}(\mathbf{R}\mathcal{L}_2)}{tV_{k_1+1}(\mathbf{R}\mathcal{L}_2)}
\stackrel{\overline{\mathbf{R}\psi_2}}{\to}
\frac{V_{k_1}(\mathbf{R}\mathcal{L}_1)}{tV_{k_1+1}(\mathbf{R}\mathcal{L}_1)}
\stackrel{\overline{\mathbf{R}\psi_1}}{\to}
 \frac{V_{k_1}(\D_{x,t}^{(h)})}{tV_{k_1+1}(\D_{x,t}^{(h)})}
\stackrel{\overline{\mathbf{R}\psi_0}}{\to}
\mathbf{R}N\to 0.
\]
We have
\[
\mathcal{F}_0:=
\frac{V_{k_1}(\D_{x,t}^{(h)})}{tV_{k_1+1}(\D_{x,t}^{(h)})}
\simeq \bigoplus_{k=0}^{k_1}\D^{(h)}\partial_t^k
\]
with $\mathrm{deg}(\partial_t^k)=k$;
\[
\mathcal{F}_1:=
\frac{V_{k_1}(\mathbf{R}\mathcal{L}_1)}{tV_{k_1+1}(\mathbf{R}\mathcal{L}_1)}
\simeq \bigoplus_{k=0}^{k_1}\D^{(h)}\partial_t^k\otimes S_1
\oplus (\bigoplus_{k=0}^{k_1-1}\D^{(h)}\partial_t^k\otimes \Lambda^1)
\oplus (\bigoplus_{k=0}^{k_1}\D^{(h)}\partial_t^k\otimes \Lambda^2)
\]
with $\mathrm{deg}(\partial_t^k\otimes X_1)=k+1$;
$\mathrm{deg}(\partial_t^k\otimes X_2)=k+1$;
$\mathrm{deg}(\partial_t^k\otimes e_i)=k+1$;
$\mathrm{deg}(\partial_t^k\otimes e_i\wedge e_j)=k+1$;
\begin{eqnarray*}
\mathcal{F}_2 & := &
\frac{V_{k_1}(\mathbf{R}\mathcal{L}_2)}{tV_{k_1+1}(\mathbf{R}\mathcal{L}_2)} \\
 & \simeq & (\bigoplus_{k=0}^{k_1-1}\D^{(h)}\partial_t^k)
\oplus (\bigoplus_{k=0}^{k_1}\D^{(h)}\partial_t^k\otimes \Lambda^1)
\oplus (\bigoplus_{k=0}^{k_1-1}\D^{(h)}\partial_t^k\otimes \Lambda^2\otimes S_1)\\
 & \oplus & (\bigoplus_{k=0}^{k_1}\D^{(h)}\partial_t^k\otimes \Lambda^3\otimes S_1)
\end{eqnarray*}
with $\mathrm{deg}(\partial_t^k.1)=k+1$;
$\mathrm{deg}(\partial_t^k\otimes e_i)=k+1$;
$\mathrm{deg}(\partial_t^k\otimes e_i\wedge e_j\otimes X_1)=k+1$;
$\mathrm{deg}(\partial_t^k\otimes e_i\wedge e_j\otimes X_2)=k+2$;
for $I=(i,j,k), \mathrm{deg}(\partial_t^k\otimes e_I\otimes X_1)=k+1$;
$\mathrm{deg}(\partial_t^k\otimes e_I\otimes X_2)=k+2$.

Let us compute the maps $\overline{\mathbf{R}\psi_i}$. 
We have $\overline{\mathbf{R}\psi_0}(\partial_t^k)=(k!)[1/f^{k+1}]T^k$.
In $\D_{x,t}^{(h)}$ the following identity holds:
\[
\partial_t^kt=t\partial_t^k+kh\partial_t^{k-1}.
\]
Then
\begin{eqnarray*}
\overline{\mathbf{R}\psi_1}(\partial_t^k\otimes X_1) & = &
[\partial_t^k(\partial_t+\theta)]\\
 & = & [t\partial_t^{k+1}t+((k+1)h+\theta)\partial_t^k]\\
 & = & ((k+1)h+\theta)\partial_t^k.
\end{eqnarray*}
In the same way the computation gives:
\begin{description}
\item $\overline{\mathbf{R}\psi_1}(\partial_t^k\otimes X_2)
=f\partial_t^k-kh\partial_t^{k-1}$
\item $\overline{\mathbf{R}\psi_1}(\partial_t^k\otimes e_i)
=-(f'_i\partial_t^{k+1}+\partial_i\partial_t^k)$
\item $\overline{\mathbf{R}\psi_1}(\partial_t^k\otimes e_i\wedge e_j)
=-S_{i,j}\partial_t^k$
\item $\overline{\mathbf{R}\psi_2}(\partial_t^k.1)
=\partial_t^k\otimes X_1+\partial_t^{k+1}\otimes X_2+
(\sum_j w_jx_j\partial_t^k\otimes e_j)$
\item $\overline{\mathbf{R}\psi_2}(\partial_t^k\otimes e_i)
=f'_i\partial_t^k\otimes X_1-\partial_i\partial_t^k\otimes X_2+
kh\partial_t^{k-1}\otimes e_i+\sum_jw_jx_j\partial_t^k\otimes e_i\wedge e_j$
\item $\overline{\mathbf{R}\psi_2}(\partial_t^k\otimes e_i\wedge e_j\otimes X_1)
=\delta_1(\partial_t^k\otimes e_i\wedge e_j)+\partial_t^k e_i\wedge e_j$
\item $\overline{\mathbf{R}\psi_2}(\partial_t^k\otimes e_i\wedge e_j\otimes X_2)
=\delta_2(\partial_t^k\otimes e_i\wedge e_j)-\partial_t^{k+1} e_i\wedge e_j$
\item $\overline{\mathbf{R}\psi_2}(\partial_t^k\otimes e_I\otimes X_l)
=\delta_l(\partial_t^k\otimes e_I)$.
\end{description}   

Since some of the entries of the matrix which represents  $\overline{\mathbf{R}\psi_2}$ are units, we have to minimalize. Our method is the same as the minimalization process described in \cite{GO04}, paragraph 4.4.
Let 
\[
\mathcal{G}=\bigoplus_{k=0}^{k_1-1}\D^{(h)}\partial_t^k\oplus
\bigoplus_{i<j}\bigoplus_{k=0}^{k_1-1}\D^{(h)}\partial_t^k\otimes e_i\wedge e_j\otimes X_2\oplus
\bigoplus_{i<j}\D^{(h)}\otimes e_i\wedge e_j\otimes X_1,
\]
a free submodule of $\mathcal{F}_2$. We have the following commutative diagram:
\[
\xymatrix{
0 \ar[d] &  0\ar[d]  & \\ 
\mathcal{G} \ar[d] \ar[r]^{\overline{\mathbf{R}\psi_2}} & 
\overline{\mathbf{R}\psi_2}(\mathcal{G})\ar[d] 
\ar[r] & 0\ar[d]\\
\mathcal{F}_2\ar[r]^{\overline{\mathbf{R}\psi_2}}\ar[d] &
\mathcal{F}_1 \ar[r]^{\overline{\mathbf{R}\psi_1}}\ar[d] & 
\mathcal{F}_0\ar[d]^{\mathrm{Id}}\\
\frac{\mathcal{F}_2}{\mathcal{G}} \ar[r]^{\phi_2} \ar[d] &
\frac{\mathcal{F}_1}{\overline{\mathbf{R}\psi_2}(\mathcal{G})}
\ar[r]^{\phi_1}\ar[d] & 
\mathcal{F}_0\ar[d]\\
0 & 0 &   0, 
}
\]
where $\phi_i$ is the map induced by $\overline{\mathbf{R}\psi_i}$ for $i=1,2$.
The columns are exact and the first two rows are exact. Then we deduce that the third row is also exact. This row will provide our minimal presentation. 
The module 
$\mathcal{F}'_2:=\mathcal{F}_2/\mathcal{G}$ is free with basis $(\partial_t^k\otimes e_i)_{i,k}$, $(\partial_t^k\otimes e_i\wedge e_j\otimes X_1)_{k\geq 1, i, j}$ and $(\partial_t^k\otimes e_I\otimes X_j)_{I=(i_1,i_2,i_3),k,j=1,2}$. It is easy to show that
\[
\mathcal{F}'_1:= 
\frac{\mathcal{F}_1}{\overline{\mathbf{R}\psi_2}(\mathcal{G})}
\]
is free with basis $(\partial_t^k\otimes X_1)_k$, $(\partial_t^k\otimes e_i)_{i,k}$ and $X_2$.
Note that in $\mathcal{F}'_1$, we have 
\begin{eqnarray*}
e_i\wedge e_j & \equiv & -\delta_1(e_i\wedge e_j),\\
\partial_t^{k}\otimes e_i\wedge e_j & \equiv & 
\delta_2(\partial_t^{k-1}\otimes e_i\wedge e_j)\quad\textrm{for}\ k\geq 1,\\
\partial_t^k\otimes X_2 & \equiv & -(\partial_t^{k-1}\otimes X_1+
\sum w_jx_je_j\partial_t^{k-1})\quad\textrm{for}\ k\geq 1.  
\end{eqnarray*}
Using these identities, we compute the matrix representing $\phi_2$ in the basis decribed above:
 
\noindent For $k\geq 1$, 
\[
\phi_2(\partial_t^k\otimes e_i)=
f'_1\partial_t^k\otimes X_1+\partial_i\partial_t^{k-1}\otimes X_1+
(k+w_i)h\partial_t^{k-1}\otimes e_i+\sum_j w_j x_j\partial_j\partial_t^{k-1}\otimes e_i;
\]
\begin{description}
\item  $\phi_2(e_i)=f'_iX_1-\partial_iX_2-\sum_j w_jx_j\delta_1(e_i\wedge e_j)$;
\item For $k\geq 1$, $\phi_2(\partial_t^k\otimes e_i\wedge e_j\otimes X_1)=
\delta_1(\partial_t^k\otimes e_i\wedge e_j)+
\delta_2(\partial_t^{k-1}\otimes e_i\wedge e_j)$.
\end{description}
Let $I=(i_1,i_2,i_3)$.
\begin{description}
\item For $k\geq 1$,
 $\phi_2(\partial_t^k\otimes e_I\otimes X_1)=
 \delta_2(\delta_1(\partial_t^{k-1}\otimes e_I))$;
 \item $\phi_2(e_I\otimes X_1)=-\delta_1(\delta_1(e_I))=0$;
 \item For $k\geq 1$,
 $\phi_2(\partial_t^k\otimes e_I\otimes X_2)=0$;
 \item $\phi_2(e_I\otimes X_2)=-\delta_1(\delta_2(e_I))$.
\end{description}
Thus the matrix representing $\phi_2$ does not contain any unity. By dehomogenizing we obtain the annunciated minimal presentation of $N$.
\end{proof}

\subsection*{Acknowledgements}
I sincerely thank Toshinori Oaku for having suggested the problem discussed in section 3, and for his comments. I am also grateful to Michel Granger for his advices throughout the elaboration of that paper. The project was funded by the Japan Society for the Promotion of Science, The Institute of Mathematics of the University of Seville IMUS, 
the University of Seville (Spain) and the Erwin Schr\"odinger International Institute for Mathematical Physics (Vienna, Austria).

\end{document}